\title{A New Least Squares Stabilized Nitsche Method for Cut Isogeometric Analysis}
\author{Daniel Elfverson, Mats G. Larson and Karl Larsson}
\date{}
\newcommand{\supp}{\text{supp}}
\newcommand{\Span}{\text{span}}
\newcommand{\hatA}{\widehat{A}}
\begin{document}

\maketitle

\begin{abstract}
We derive a new stabilized symmetric Nitsche method for enforcement of Dirichlet boundary conditions for elliptic problems of second order in cut isogeometric analysis (CutIGA). We consider $C^1$ splines and stabilize the standard Nitsche method by adding a certain elementwise least squares terms in the vicinity of the Dirichlet boundary and an additional term on the boundary which involves the tangential gradient. We show coercivity with respect to the energy norm for functions in $H^2(\Omega)$ and optimal order a priori error estimates in the energy and $L^2$ norms. To obtain a well posed linear system of equations we combine our formulation with basis function removal which essentially eliminates basis functions with sufficiently small intersection with $\Omega$. The upshot of the formulation is that only elementwise stabilization is added in contrast to standard procedures based on ghost penalty and related techniques and that the stabilization is consistent. In our numerical experiments we see that the method works remarkably well in even extreme cut situations using a Nitsche parameter of moderate size.
\end{abstract}

\section{Introduction}
\paragraph{Earlier Work.} Cut finite element methods allow the geometric description of the 
computational domain to cut through the mesh in an arbitrary way. The resulting cut elements 
lead to difficulties on the Dirichlet boundary. Typically three approaches are used to handle 
this situation:
\begin{itemize}
\item Symmetric Nitsche in combination with stabilization using ghost penalties 
\cite{Bu10} and for the higher order case \cite{MaLaLoRo14} or element merging 
\cite{BaVe17,JoLa2013} which ensures that the 
necessary inverse inequality holds to guarantee coercivity. 
\item Symmetric Nitsche in combination with a sufficiently large value of the Nitsche parameter to ensure coercivity, see for instance \cite{PreVerBru17,PrLeMa17}.  This approach can 
also be combined with some stabilization for instance of finite cell type \cite{PaDu07} where 
a small amount of added stiffness is added to the full element.
\item Nonsymmetric Nitsche, see \cite{OdBa98}, which avoids the use of the inverse inequality 
to establish coercivity. Note however that some additional stabilization is necessary to establish 
a priori error estimates, see \cite{ElfLarLar18} for details.
\end{itemize}
The first alternative rests on a complete theoretical basis; the second is common in practice but 
optimal order a priori bounds can not be established in general since the penalty parameter 
may become very large, see the discussion in \cite{PrLeMa17}; and the third alternative was 
considered in \cite{ElfLarLar18} where a least squares term was added in the vicinity of 
the Dirichlet part of the boundary to provide the additional stability necessary to establish a priori 
error bounds. 

We refer to the overview article \cite{BuCl15}, and the recent conference proceedings 
\cite{BoBuLaOl17} for an overview of current research on cut element methods.

\paragraph{New Contributions.}
In this paper we develop a new symmetric Nitsche formulation for cut $C^1$ elements that is 
coercive but does not rely on ghost penalties or choosing a very large penalty parameter in 
the Nitsche penalty term. The method is instead based on adding two properly scaled consistent 
least squares terms. First one term on elements in the vicinity of the boundary and second 
one term which provides control of the tangent gradient along the boundary. The latter term is 
related to enforcement of the Dirichlet boundary condition in $H^{1/2}(\partial \Omega)$, 
where $\Omega$ is the computational domain with boundary $\partial \Omega$.
On standard elements which are not cut we may apply an inverse inequality and bound the least squares term involving the tangent gradient in terms of the standard Nitsche term but this is not possible on cut elements (unless some other stabilization is used). Therefore, on cut elements the stabilization term provides the additional control which reflects enforcement of the Dirichlet boundary condition in $H^{1/2}(\partial \Omega)$.

Together, these terms lead to a Nitsche formulation which is coercive with respect to the energy norm on $V = H^2(\Omega)$ in contrast to standard analysis of symmetric Nitsche which rely on inverse inequalities on the 
finite element space $V_h$. We utilize the added smoothness of the splines in our derivations 
and therefore only consider finite element spaces with at least $C^1$ regularity. The bulk 
part of the new least squares stabilization was used in \cite{ElfLarLar18} in combination with 
the non 
symmetric Nitsche formulation. Here we show that adding some additional stabilization also 
on the boundary leads to a natural extension of these results to the symmetric Nitsche formulation. 
Another interesting aspect of the new method is that we obtain explicit values of the stabilization 
parameters including the Nitsche parameter. 

We focus in particular on the B-spline spaces which are commonly used in isogeometric 
analysis \cite{CoHu09} but our approach is applicable to other types of $C^1$ finite element 
spaces, for instance tensor products of Hermite splines.

Our formulation is coercive on $H^2(\Omega)$ but the stiffness matrix is only guaranteed to 
be positive semidefinite since we allow elements with arbitrarily small intersection with the 
domain $\Omega$. In order to ensure that the stiffness matrix is positive definite we employ 
the recently introduced Basis Function Removal technique, see \cite{ElfLarLar18}, which builds 
on the obvious idea  of simply excluding basis functions with very small intersection. This 
can be done in such a way that accuracy is not lost. 

In our numerical investigations the method produces convergence results which are remarkably stable with respect to the cut situation. Also, the Nitsche penalty parameter in this case can be kept at a moderate size, which avoids problems due to locking when the boundary is curved within cut elements or boundary data is inhomogenous.

\paragraph{Outline.} The paper is organized as follows.
In Section 2 we present a model problem and introduce the least squares stabilized Nitsche formulation. In Section 3 we prove stability and error estimates in the energy and $L^2$ norms.
In Section 4 we detail the use of basis function removal to assure bounded condition numbers.
In Section 5 we present illustrating numerical examples which confirms the theoretical results. 

\section{The Model Problem and Method}

\subsection{The Dirichlet Problem}
Let $\Omega$ be a domain in $\IR^d$ with smooth boundary $\partial \Omega$ 
and consider the problem: find $u: \Omega \rightarrow \IR$ such that 
\begin{alignat}{3}
-\Delta u &=f & \qquad &\text{in $\Omega$}
\\
u &=g & \qquad &\text{on  $\partial \Omega$}
\end{alignat}
For sufficiently regular data there exists a unique solution to this problem.
Since we are interested in higher order methods we will always assume that the solution 
satisfies the regularity estimate 
 \begin{equation}\label{eq:regularity}
 \| u \|_{H^{s}(\Omega)} \lesssim 
 \|f \|_{H^{s-2}(\Omega)} 
 + 
 \|g \|_{H^{s-1/2}(\partial\Omega)}
 \end{equation}
for some $s \geq 2$. Here and below $a \lesssim b$ means that there is 
a positive constant $C$ such that $a \leq C b$.

\subsection{The B-Spline Spaces}

\begin{figure}
\centering
\includegraphics[width=0.9\linewidth]{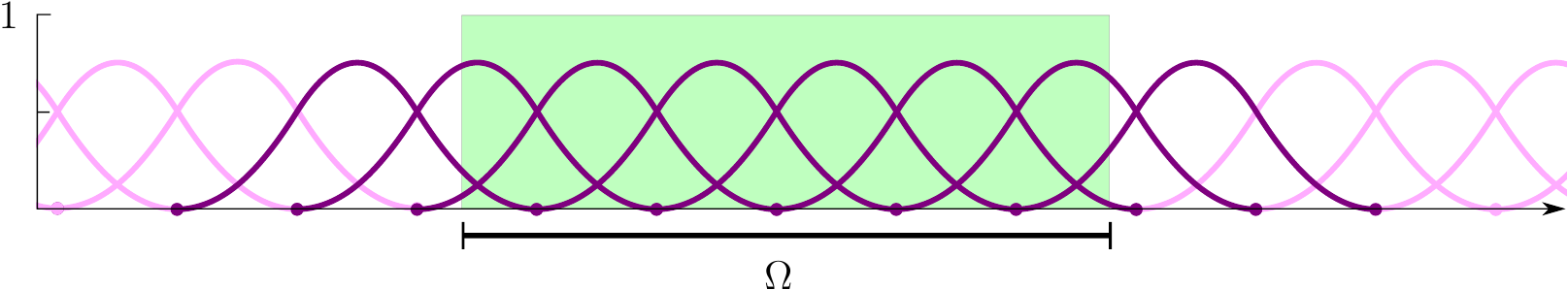}
\caption{Quadratic B-spline basis functions in one dimension. The set $B$ of basis functions with non-empty support in $\Omega$ are indicated in deep purple. Note that basis functions crossing the boundary 
of $\Omega$ are defined analogously to interior basis functions. Reproduced from \cite{ElfLarLar18} under the creative commons licence (\texttt{http://creativecommons.org/licenses/by/4.0/}).}
\label{fig:splines}
\end{figure}
\paragraph{Definitions.}
\begin{itemize}
\item Let $\widetilde{\mcT}_{h}$, $h \in (0,h_0]$, for some constant $h_0>0$, be a family of uniform tensor 
product meshes in $\IR^d$ with mesh parameter $h$. 
\item Let $\widetilde{V}_{h}=C^{p-1}Q^p(\IR^d)$ be the space of 
$C^{p-1}$ tensor product B-splines of degree $p$ defined on 
$\widetilde{\mcT}_{h}$. Let $\widetilde{B} = \{\varphi_i\}_{ i\in \widetilde{I}}$ be 
the standard basis in $\widetilde{V}_h$, where $\widetilde{I}$ is an index set.
\item Let $B = \{ \varphi \in \widetilde{B} \, : \,
\supp(\varphi) \cap \Omega \neq \emptyset \}$ be the set of basis functions with 
support that intersects $\Omega$. Let $I$ be an index set for $B$. Let $V_h = \Span\{B\}$ 
and let $\mcT_h = \{ T \in \widetilde{\mcT}_h : T \subset \cup_{\varphi \in B } \supp(\varphi) \}$. 
\item We will only consider $p \geq 2$ corresponding to at least $C^1$ splines. We 
then have $V_h \subset V = H^2(\Omega)$. 
The case $p=2$ in 1D is illustrated in Figure~\ref{fig:splines}.
\end{itemize}
\begin{rem} To construct the basis functions in $\widetilde{V}_h$ 
we start with the one dimensional line $\IR$ and define a uniform partition, 
with nodes $x_i = i h$, $i \in \mathbb{Z}$, where $h$ is the mesh parameter, 
and elements $I_i = [x_{i-1},x_i)$. We define 
\begin{equation}
\varphi_{i,0}(x) 
=
\begin{cases}
1 & x \in I_i
\\
0 & x \in \IR \setminus I_i
\end{cases}
\end{equation}
The basis functions $\varphi_{i,p}$ are then defined by the Cox-de Boor recursion 
formula 
\begin{equation}
\varphi_{i,p} = \frac{x - x_i}{x_{i+p} - x_i} \varphi_{i,p-1}(x) 
+ \frac{x_{i+p+1} - x}{x_{i+p+1} - x_{i+1}} \varphi_{i+1,p-1}(x) 
\end{equation}
we note that these basis functions are $C^{p-1}$ and supported on 
$[x_i,x_{i+p+1}]$ which corresponds to $p+1$ elements.
We then define tensor product basis functions in 
$\IR^d$ of the form 
\begin{equation}
\varphi_{i_1,\dots,i_d} (x) = \prod_{k=1}^d \varphi_{i_k}(x_k)
\end{equation}
\end{rem}

\subsection{The Finite Element Method}

\paragraph{Method.}
Find 
$u_{h} \in V_{h}$ such that 
\begin{equation}\label{eq:method}
A_{h}(u_{h},v) = L_h(v)\qquad v \in V_{h}
\end{equation}

\paragraph{Forms.} Let $\delta \sim h$ be a parameter and define the forms
\begin{align}\label{eq:Ah}
A_h(v,w) &= a_h(v,w) 
- (n\cdot \nabla v,w)_{\partial \Omega} 
- (v, n\cdot \nabla w)_{\partial \Omega} 
+\beta b_h(v,w)
\\ \label{eq:ah}
a_h(v,w)&= (\nabla v, \nabla w)_\Omega
+ \tau \delta^2(\Delta v,\Delta w)_{\mcT_{h,\delta} \cap \Omega}
\\ \label{eq:bh}
b_h(v,w)&= 
  (2 + \tau^{-1}) \delta^{-1} (v,w)_{\partial \Omega} 
+ 2\delta (\nabla_T v, \nabla_T w)_{\partial \Omega} 
\\ \label{eq:Lh}
L_h(v) &= (f, v)_\Omega
- \tau \delta^2 (f,\Delta v)_{\mcT_{h,\delta}\cap \Omega}
- (g,n\cdot \nabla v)_{\partial\Omega} + \beta b_h(g,v)
\end{align}
Here we used the notation:
\begin{itemize}
\item $\beta$ is the penalty parameter which can take a moderate value for instance $5 \leq \beta$ 
and $\tau$ is a positive parameter which enables us to trade weight between the least squares 
bulk term and the standard Nitsche term. We note in practice that a small $\tau$ leads to a more accurate method. We refer to the numerical section for more details on the choice of parameters.
\item $\nabla_T$ is the tangential gradient at $\partial \Omega$ defined by $\nabla_T= P \nabla$, where $P = I - n\otimes n$ is the projection of vectors in $\IR^d$ onto the tangent plane of the boundary $\partial \Omega$. 
\item In \eqref{eq:ah} we used the form
\begin{equation}
(v,w)_{\mcT_{h,\delta} \cap \Omega} 
= \sum_{T \in \mcT_{h,\delta}} (v,w)_{T\cap \Omega}
\end{equation}
where $\mcT_{h,\delta} \subset \mcT_h$ is defined by 
\begin{equation}\label{def:Th}
\mcT_{h,\delta} = \mcT_h(U_\delta (\partial \Omega))
= \{T \in \mcT_h : T \cap U_\delta (\partial \Omega) \neq \emptyset \}  
\end{equation}
and
\begin{equation}\label{eq:Udelta}
U_\delta (\partial \Omega) = \left(\bigcup_{x\in \partial \Omega} B_\delta(x)\right) 
\cap \Omega
\end{equation}
with $\delta \sim h$ and $B_\delta(x)$ the open ball with center $x$ and 
radius $\delta$. See Figure~\ref{fig:spline-bean} for an illustration.
\end{itemize}

\begin{figure}
\centering
\begin{subfigure}[t]{.3\linewidth}
\includegraphics[width=\linewidth]{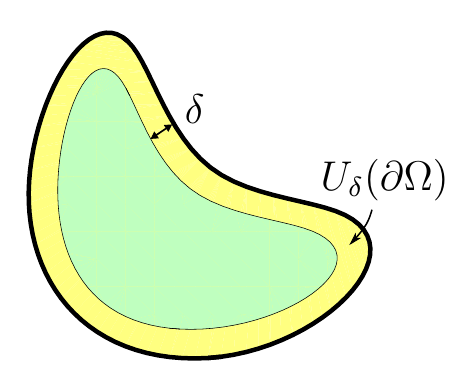}
\end{subfigure}
\quad
\begin{subfigure}[t]{.3\linewidth}
\includegraphics[width=\linewidth]{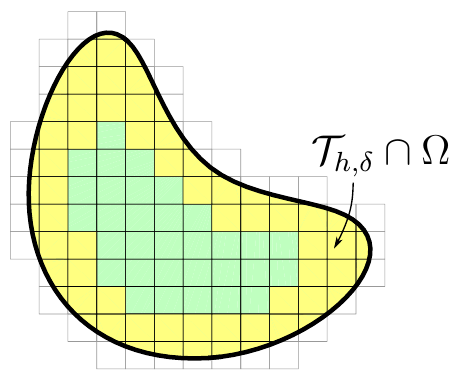}
\end{subfigure}
\caption{Illustrations of subdomains $U_\delta(\partial\Omega)$ and $\mcT_{h,\delta}\cap\Omega$ where $\mcT_{h,\delta}$ is extracted according to Remark~\ref{rem:Th-practice}.}
\label{fig:spline-bean}
\end{figure}

\paragraph{Galerkin Orthogonality.} It holds 
\begin{equation}\label{eq:galort}
A_h( u - u_h, v) = 0 \qquad \forall v \in V_h
\end{equation}
This identity follows directly from the consistency of the standard Nitsche method and the fact that 
we have only added consistent least squares terms.

\begin{rem} \label{rem:Th-practice}
In practice, if $\delta= h$ is used $\mcT_{h,\delta}$ may be taken as the set of all elements that intersect 
the Dirichlet boundary $\partial \Omega$ and their neighbors, i.e. $\mcT_{h,\delta} = 
\mcN_h(\mcT_h(\partial \Omega))$, see Figure~\ref{fig:spline-bean}.
\end{rem}

\begin{rem} We note that in addition to the usual Nitsche formulation we have an interior least 
squares term and also a term providing control of the tangent derivative along the boundary. Both 
of these terms can be computed elementwise using the standard assembly of the stiffness matrix. 
Note also that the stabilization terms do not cause fill in which is the case with standard ghost 
penalty approaches \cite{Bu10, JoLaLa17, MaLaLoRo14}.
\end{rem}

\section{Error Estimates}
\subsection{Properties of $\boldsymbol{A}_{\boldsymbol{h}}$}
\paragraph{Norms.}
Define the norms 
\begin{align}\label{eq:energy-norm}
 \tn v \tn_{h}^2 &= \| v \|^2_{a_h} + \| v \|^2_{b_h}
 \\
\| v \|^2_{a_h} &= a_h(v,v) = \| \nabla v \|^2_\Omega 
+ \tau \delta^2 \| \Delta v \|^2_{\mcT_{h,\delta} \cap \Omega}
\\
\| v \|^2_{b_h} &= b_h(v,v) = (2 + \tau^{-1})\delta^{-1} \| v \|^2_{\partial \Omega} + 2 \delta \| \nabla_T v \|^2_{\partial \Omega}
\end{align}

\paragraph{Technical Lemmas.}
Let $\rho$, with $\rho>0$ in $\Omega$, be the signed 
distance function associated with $\partial \Omega$ and recall that the closest point mapping 
$\eta : U_\delta \rightarrow \partial \Omega$ is well defined for $0<\delta \leq \delta_0$ 
where 
\begin{equation}\label{eq:delta-0-bound}
\delta_0 \| \kappa \|_{L^\infty(\partial \Omega)} \leq C_1 < 1
\end{equation}
Here $\kappa = \nabla^2 \rho$ is the curvature tensor of $\partial \Omega$ and 
$\| \kappa \|_{L^\infty(\partial \Omega)} = \sup_{x \in \partial \Omega} \| \kappa \|_{\IR^d}$ where $\| \cdot \|_{\IR^d}$ on a $d\times d$ matrix is the operator norm. 
See \cite{GiTr01} for further details.
Through the closest point mapping we extend functions $v$ defined on $\partial\Omega$ onto $U_\delta$ and we use the notation
\begin{equation}
v^e = v \circ \eta
\end{equation}
Furthermore, we will let $C$ denote constants 
independent of the mesh, the domain, and the intersection between the domain and 
the mesh, which are not the same at all occurances. An index is used to indicate the constant 
in a specific inequality.

We consider mesh parameters $h \in (0,h_0]$, and let $\delta$ 
be a parameter such that 
\begin{equation}\label{eq:assum-h-delta}
0< h \leq \delta \leq h_0 \leq \delta_0
\end{equation}
which essentially means that that the mesh resolves the boundary.

\begin{lem} \label{lem:aux1}
Let $\delta$ satisfy (\ref{eq:delta-0-bound}), then
\begin{align}\label{eq:ext-bound-l2}
\| v^e \|^2_{U_\delta} &\lesssim \delta \| v \|^2_{\partial \Omega}
\\ \label{eq:ext-bound-nabla}
\| \nabla v^e \|^2_{U_\delta} &\lesssim \delta \| \nabla_T v \|^2_{\partial \Omega}
\end{align}
where the hidden constant takes the form $1+ C \delta \| \kappa \|_{L^\infty(\partial \Omega)}$.
\end{lem}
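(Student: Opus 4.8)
The plan is to pass to tubular (Fermi) coordinates adapted to $\partial\Omega$. Since $\delta\leq\delta_0$, the bound (\ref{eq:delta-0-bound}) guarantees that the closest point mapping $\eta$ is a diffeomorphism of $U_\delta$ onto $\partial\Omega$ along normal fibres, so that every $x\in U_\delta$ is written uniquely as $x=\eta(x)-\rho(x)\,(n\circ\eta)(x)$ with $\eta(x)\in\partial\Omega$ and $\rho(x)\in(0,\delta)$. First I would record the change of variables: writing $x\leftrightarrow(y,t)=(\eta(x),\rho(x))$, the volume element factors as $dx=J(y,t)\,dt\,dy$, where $J(y,t)=\prod_{i=1}^{d-1}(1-t\kappa_i(y))$ and $\kappa_i(y)$ are the principal curvatures of $\partial\Omega$ at $y$. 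The condition (\ref{eq:delta-0-bound}) then yields both a positive lower bound $J\geq(1-C_1)^{d-1}>0$, which certifies that the coordinates are admissible, and the upper bound
\[
|J(y,t)|\leq \bigl(1+\delta\|\kappa\|_{L^\infty(\partial\Omega)}\bigr)^{d-1}\leq 1+C\delta\|\kappa\|_{L^\infty(\partial\Omega)},
\]
which is precisely the form of the hidden constant claimed in the statement.

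With this in hand, (\ref{eq:ext-bound-l2}) follows almost immediately. By construction $v^e=v\circ\eta$ is constant along each normal fibre, so $v^e(x)=v(y)$ is independent of $t$ in these coordinates. Applying the change of variables and noting that for each $y$ the $t$-integration runs over an interval of length at most $\delta$ gives
\[
\|v^e\|^2_{U_\delta}=\int_{\partial\Omega}\!\!\int |v(y)|^2\,J(y,t)\,dt\,dy\leq \bigl(1+C\delta\|\kappa\|_{L^\infty(\partial\Omega)}\bigr)\,\delta\,\|v\|^2_{\partial\Omega},
\]
which is the first estimate.

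For (\ref{eq:ext-bound-nabla}) the crux is to compute $\nabla v^e$. Using $\nabla\rho=n\circ\eta$, the identity $\nabla^2\rho\,\nabla\rho=\tfrac12\nabla|\nabla\rho|^2=0$, and $\eta(x)=x-\rho\nabla\rho$, differentiation gives $D\eta=P^e-\rho\,\nabla^2\rho$, with $P^e=I-\nabla\rho\otimes\nabla\rho$; this matrix is symmetric and acts as the identity on tangential fields. The chain rule then produces $\nabla v^e=(D\eta)^{\mathsf T}(\nabla_T v)^e=(\nabla_T v)^e-\rho\,\nabla^2\rho\,(\nabla_T v)^e$, since $(\nabla_T v)^e$ is tangential. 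Because (\ref{eq:delta-0-bound}) bounds the eigenvalues of $\nabla^2\rho$ inside $U_\delta$ by $C\|\kappa\|_{L^\infty(\partial\Omega)}$, I obtain the pointwise estimate $|\nabla v^e|\leq\bigl(1+C\delta\|\kappa\|_{L^\infty(\partial\Omega)}\bigr)|(\nabla_T v)^e|$. Applying (\ref{eq:ext-bound-l2}) componentwise to the tangential vector field $\nabla_T v$ then yields $\|\nabla v^e\|^2_{U_\delta}\lesssim\delta\|\nabla_T v\|^2_{\partial\Omega}$ with a constant of the required form.

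I expect the main obstacle to be the geometric bookkeeping in the gradient step: deriving $D\eta=P^e-\rho\,\nabla^2\rho$ cleanly and verifying that $\nabla^2\rho$ annihilates the normal and preserves the tangent space, so that the curvature correction is genuinely controlled by $1+C\delta\|\kappa\|_{L^\infty(\partial\Omega)}$. Confirming that the Jacobian $J$ stays bounded away from zero and above by the claimed constant, which underpins the validity of the coordinates, likewise relies essentially on the smallness condition (\ref{eq:delta-0-bound}).
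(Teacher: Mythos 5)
Your proposal is correct and follows essentially the same route as the paper: both integrate over the level sets of the signed distance function with a Jacobian (your $J(y,t)$, the paper's $\mu_t$) bounded by $1+C t\|\kappa\|_{L^\infty(\partial\Omega)}$, and both obtain the gradient bound from the chain rule via $D\eta = I - n\otimes n - \rho\,\nabla^2\rho$ acting on the tangential field $(\nabla_T v)^e$. The only cosmetic difference is that you phrase the change of variables through the product of principal curvatures rather than the surface-measure ratio, which is the same estimate.
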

\begin{proof}
For $t \in (0,\delta_0)$ we define  $\partial \Omega_t = \{ x \in \Omega : \rho(x) = t \}$. Let 
$dx_t$ be the surface measure on $\partial \Omega_t$. Then we have $dx_t = \mu_t dx$, 
where $dx$ is the surface measure on $\partial \Omega$, and the following estimate 
\begin{equation}\label{eq:measure}
\|\mu_t\|_{L^\infty(\partial \Omega)} \leq 1 +  C t \| \kappa \|_{L^\infty(\partial \Omega)}
\end{equation}

\paragraph{(\ref{eq:ext-bound-l2})} Using (\ref{eq:measure}) we obtain 
\begin{align}
\| w^e \|^2_{U_\delta} 
&=
\int_0^\delta \| w^e \|^2_{\partial \Omega_t} \, dt 
\\
&=
\int_0^\delta \int_{\partial \Omega_t} |w^e|^2 \, d x_t \, dt 
\\
&= 
\int_0^\delta \int_{\partial \Omega} |w|^2 \mu_t \, d x \, dt
\\
&
\leq \int_0^\delta \left( 1 + C t   \| \kappa \|_{L^\infty(\partial \Omega)} \right) \, dt  \int_{\partial \Omega} |w|^2 \, d x
\\
&
= \delta \left(1 + C \frac{\delta}{2}  \| \kappa \|_{L^\infty(\partial \Omega)} \right)
\| w \|^2_{\partial \Omega}
\end{align}

\paragraph{(\ref{eq:ext-bound-nabla}).} Using the chain rule we have 
\begin{equation}\label{eq:chain-rule}
(\nabla v^e)|_x  = (D\eta|_x)^T  (\nabla v)|_{\eta(x)} 
= (I - \rho(x) \kappa(x))^T (P\nabla v)|_{\eta(x)}
\end{equation}
where $D \eta (x)  = I - n(\eta(x)) \otimes n(\eta(x)) - \rho(x) \kappa(x)$ 
is the Jacobian of the closest point mapping. Thus we obtain the estimate 
\begin{equation}
\| \nabla v^e(x) \|_{\IR^d} 
\leq \| I - \rho(x) \kappa(x) \|_{\IR^d} \| (\nabla_T v)^e \|_{\IR^d} 
\leq \left( 1 + \rho(x) \| \kappa(x) \|_{\IR^d} \right) \| (\nabla_T v)^e \|_{\IR^d}
\end{equation}
where $\| \kappa(x) \|_{\IR^d} \leq (1 - C_1)^{-1} \| \kappa (\eta (x )) \|_{\IR^d} 
\leq C \| \kappa( \eta (x )) \|_{\IR^d}$, and $C_1$ is the constant in (\ref{eq:delta-0-bound}), 
which we may take to $1/2$ for instance. We thus have 
\begin{equation}
\| \nabla v^e (x)  \|_{\IR^d} \leq \big(1 + C \rho(x) \| \kappa \|_{L^\infty(\partial \Omega)} \big)  \| (\nabla_T v)^e \|_{\IR^d} \qquad x \in U_\delta
\end{equation}
We now estimate \eqref{eq:ext-bound-nabla} in as follows
\begin{align}
\| \nabla w^e \|^2_{U_\delta} 
&=
\int_0^\delta \| \nabla w^e \|^2_{\partial \Omega_t} \, dt 
\\
&=
\int_0^\delta \int_{\partial \Omega_t} \| \nabla w^e\|_{\IR^d}^2 \, d x_t \, dt 
\\
&=
\int_0^\delta \int_{\partial \Omega_t} \| (I - t \kappa) (\nabla_T w)^e\|_{\IR^d}^2 \, d x_t \,dt 
\\
&\leq 
\int_0^\delta 
\left(1 + C t \| \kappa \|_{L^\infty(\partial \Omega)} \right)^2 \|\mu_t\|_{L^\infty(\partial \Omega)} \,dt 
\int_{\partial \Omega} \|\nabla_T v\|_{\IR^d}^2  \,dx 
\\
&
\lesssim 
\delta \bigl( 1 + C \delta \| \kappa \|_{L^\infty(\partial \Omega)} \bigr)
 \| \nabla_T w \|^2_{\partial \Omega}
\end{align}
Here we used the estimate 
\begin{align}
\nonumber
&\int_0^\delta 
\bigl(1 + C t \| \kappa \|_{L^\infty(\partial \Omega)} \bigr)^2 \|\mu_t\|_{L^\infty(\partial \Omega)} \, dt 
\\
&\qquad\quad
\leq
\int_0^\delta 1 + 3 C t \| \kappa \|_{L^\infty(\partial \Omega)} 
+ 3(C t)^2 \| \kappa  \|^2_{L^\infty(\partial \Omega)}
+ (C t)^3 \| \kappa  \|^3_{L^\infty(\partial \Omega)}
\, dt
\\
&\qquad\quad
= 
\delta \biggl(1 + 
\frac{3}{2} C\delta \| \kappa \|_{L^\infty(\partial \Omega)} 
+ (C \delta)^2 \| \kappa \|^2_{L^\infty(\partial \Omega)}
+ \frac{1}{4}(C t)^3 \| \kappa  \|^3_{L^\infty(\partial \Omega)}
\biggr)
\\
& \qquad\quad
\lesssim 
\delta \bigl( 1 + C \delta \| \kappa \|_{L^\infty(\partial \Omega)} \bigr)
\end{align}
where we at last used (\ref{eq:delta-0-bound}). 
\end{proof}

\begin{lem}[Continuity for the Consistency Form] \label{lem:const-bound}
The following estimate holds 
\begin{align}\label{eq:trace}
\left| (n\cdot \nabla v, w)_{\partial \Omega} \right|
&\lesssim 
\| v \|_{a_h} \| w \|_{b_h}\qquad v,w \in V
\end{align}
with a hidden constant of the form $( 1 + C\delta \| \kappa\|_{L^\infty(\partial\Omega)})^{1/2}$.
\end{lem}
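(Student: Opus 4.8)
The plan is to rewrite the boundary integral as a bulk integral over the tube $U_\delta$ by means of the divergence theorem, so that the normal derivative $n\cdot\nabla v$ is never estimated directly on $\partial\Omega$. This detour is the crux of the matter: a standard trace inequality applied to $\nabla v$ would produce the full Hessian $\nabla^2 v$ in the bulk, which the norm $\|\cdot\|_{a_h}$ does not control, whereas $\|\cdot\|_{a_h}$ does control the Laplacian $\Delta v$. Dually, the construction must be arranged so that only the tangential gradient $\nabla_T w$ of the second factor survives, since $\|\cdot\|_{b_h}$ controls $\nabla_T w$ and $w|_{\partial\Omega}$ but not the full gradient of $w$.

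Concretely, let $w^e = w\circ\eta$ be the closest point extension of $w$ to $U_\delta$ and introduce the cutoff $\lambda = 1-\rho/\delta$, which satisfies $0\le\lambda\le1$, equals $1$ on $\partial\Omega$, vanishes on the outer surface $\partial\Omega_\delta=\{\rho=\delta\}$, and has $\nabla\lambda=-\delta^{-1}\nabla\rho$ with $|\nabla\rho|=1$. Consider the field $F=\lambda\,w^e\,\nabla v$. Since the outward unit normal of $\Omega$ satisfies $n=-\nabla\rho$ and $w^e=w$ on $\partial\Omega$, while $\lambda$ annihilates the contribution from $\partial\Omega_\delta$, the divergence theorem (valid for $v,w\in H^2(\Omega)$ by density) yields
\begin{equation}
(n\cdot\nabla v,w)_{\partial\Omega} = \int_{U_\delta}\nabla\cdot F\,dx = \underbrace{\int_{U_\delta}\lambda\,\Delta v\,w^e\,dx}_{T_1} + \underbrace{\int_{U_\delta}\lambda\,\nabla v\cdot\nabla w^e\,dx}_{T_2} + \underbrace{\int_{U_\delta}w^e\,\nabla v\cdot\nabla\lambda\,dx}_{T_3}.
\end{equation}

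I would then bound $T_1,T_2,T_3$ by Cauchy--Schwarz, using $0\le\lambda\le1$, $|\nabla\lambda|=\delta^{-1}$, the inclusion $U_\delta\subset\mcT_{h,\delta}\cap\Omega$ (immediate from \eqref{def:Th}) to pass from $\|\Delta v\|_{U_\delta}$ to $\|\Delta v\|_{\mcT_{h,\delta}\cap\Omega}$, and Lemma~\ref{lem:aux1} to convert the bulk norms of the extension into boundary norms. This gives
\begin{equation}
T_1\lesssim\|\Delta v\|_{\mcT_{h,\delta}\cap\Omega}\,\delta^{1/2}\|w\|_{\partial\Omega},\qquad T_2\lesssim\|\nabla v\|_\Omega\,\delta^{1/2}\|\nabla_T w\|_{\partial\Omega},\qquad T_3\lesssim\|\nabla v\|_\Omega\,\delta^{-1/2}\|w\|_{\partial\Omega},
\end{equation}
where $T_1$ and $T_3$ use \eqref{eq:ext-bound-l2}, $T_2$ uses \eqref{eq:ext-bound-nabla}, and every hidden constant carries the factor $(1+C\delta\|\kappa\|_{L^\infty(\partial\Omega)})^{1/2}$ inherited from Lemma~\ref{lem:aux1}.

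It then remains to match each bound against the norms, which is where the coefficients in \eqref{eq:bh} are used. Writing $\delta^{1/2}\|\Delta v\|_{\mcT_{h,\delta}\cap\Omega}=\delta^{-1/2}\big(\delta\|\Delta v\|_{\mcT_{h,\delta}\cap\Omega}\big)$ and using $\delta\|\Delta v\|_{\mcT_{h,\delta}\cap\Omega}\le\tau^{-1/2}\|v\|_{a_h}$, the factor $\tau^{-1/2}$ generated in $T_1$ pairs with $\delta^{-1/2}\|w\|_{\partial\Omega}$ and is absorbed through $\tau^{-1/2}\delta^{-1/2}\|w\|_{\partial\Omega}\le\|w\|_{b_h}$, since $\|w\|^2_{b_h}$ contains the term $\tau^{-1}\delta^{-1}\|w\|^2_{\partial\Omega}$ (this is exactly why the coefficient is $2+\tau^{-1}$ rather than just $2$). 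In $T_2$ we use $\|\nabla v\|_\Omega\le\|v\|_{a_h}$ and $\delta^{1/2}\|\nabla_T w\|_{\partial\Omega}\le2^{-1/2}\|w\|_{b_h}$, and in $T_3$ we use $\|\nabla v\|_\Omega\le\|v\|_{a_h}$ and $\delta^{-1/2}\|w\|_{\partial\Omega}\le\|w\|_{b_h}$. Summing the three contributions gives $|(n\cdot\nabla v,w)_{\partial\Omega}|\lesssim\|v\|_{a_h}\|w\|_{b_h}$ with a constant of the stated form, the remaining slack in the coefficients $2+\tau^{-1}$ and $2$ being reserved for the coercivity estimate to follow. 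The main obstacle is entirely in the first step: setting up the divergence theorem with the normal-constant extension $w^e$ and the cutoff $\lambda$ so that only $\Delta v$ and $\nabla_T w$ — precisely the quantities that the energy norm controls — appear in the bulk.
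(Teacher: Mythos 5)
Your proposal is correct and follows essentially the same route as the paper: the identity you obtain from the divergence theorem applied to $\lambda\, w^e\, \nabla v$ is exactly the paper's Green's-formula identity with the cutoff $\chi = 1-\rho/\delta$, and the subsequent use of Lemma~\ref{lem:aux1} and the matching against the coefficients of $a_h$ and $b_h$ is the same. The only difference is that you bound the three terms separately and sum, which proves the $\lesssim$ but yields a hidden constant roughly three times larger; the paper instead performs one joint Cauchy--Schwarz over all three terms (with the weight $\tau_*=2\tau$), which is what produces the sharp constant $\bigl(1+C\delta\|\kappa\|_{L^\infty(\partial\Omega)}\bigr)^{1/2}$ close to one that is later used to argue that $\beta$ can be taken of moderate size.
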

\begin{proof} Given $\delta>0$ let $\chi:\Omega \rightarrow [0,1]$ be defined by
\begin{equation}
\chi(x)
=
\begin{cases}
1 - \rho(x)/\delta & \qquad \text{$x \in U_\delta$}
\\
0  & \qquad \text{$x \in \Omega \setminus U_\delta$}
\end{cases}
\end{equation}
Then we have 
\begin{equation}
\| \chi \|_{L^\infty(\Omega)} = 1, \qquad \| \nabla \chi \|_{L^\infty(\Omega)} = 1/\delta
\end{equation}
Using Green's formula we have the identity 
\begin{align}
(\nabla v, \chi \nabla w^e )_{U_\delta} 
&=(n\cdot \nabla v, \chi w^e)_{\partial \Omega} 
-(\Delta v, \chi w^e)_{U_\delta} - (\nabla v, (\nabla \chi) w^e)_{U_\delta}
 \end{align}
By rearranging the terms, applying the triangle inequality and Cauchy--Schwarz inequality we have
\begin{align}
&\left| (n\cdot \nabla v, \chi w^e)_{\partial \Omega} \right|
\leq
\left| (\nabla v, \chi \nabla w^e)_{U_\delta} \right|
+ 
\left| (\Delta v, \chi w^e)_{U_\delta} \right|
+ 
\left| (\nabla v, (\nabla \chi) w^e)_{U_\delta} \right|
\\
&\qquad \leq
\|\nabla v\|_{U_\delta} \|\nabla w^e \|_{U_\delta}  
+ 
\tau_*^{1/2} \delta \|\Delta v\|_{U_\delta}  \tau_*^{-1/2} \delta^{-1} \|w^e \|_{U_\delta} 
+ 
\|\nabla v\|_{U_\delta} \delta^{-1} \|w^e \|_{U_\delta}
\\
&\qquad \leq
\Big( 2 \|\nabla v\|^2_{U_\delta} 
+ 
\tau_* \delta^2 \|\Delta v\|^2_{U_\delta} \Big)^{1/2} 
\Big(( 1 + \tau_*^{-1} ) \delta^{-2}\|w^e \|^2_{U_\delta} 
+ 
\|\nabla w^e \|^2_{U_\delta} \Big)^{1/2} 
\\
&\qquad \lesssim 
\Big( 2 \|\nabla v\|^2_{U_\delta} 
+ 
\tau_* \delta^2 \|\Delta v\|^2_{U_\delta} \Big)^{1/2} 
\Big(( 1 + \tau_*^{-1} ) \delta^{-1}\|w \|^2_{\partial \Omega} 
+ 
\delta \|\nabla_T w \|^2_{\partial \Omega} \Big)^{1/2} 
\end{align}
where we in the last inequality use Lemma~\ref{lem:aux1}.
Choosing $\tau_* = 2\tau$ we finally arrive at  
\begin{align}
\left| (n\cdot \nabla v, w)_{\partial \Omega} \right|
&\lesssim 
\left(\|\nabla v\|^2_{U_\delta} 
+ 
\tau \delta^2 \|\Delta v\|^2_{U_\delta} \right)^{1/2}
\\&\quad \ \nonumber
\times \left( (2 + \tau^{-1}) \delta^{-1} \| w \|^2_{\partial \Omega} 
+ 
2 \delta \| \nabla_T w \|^2_{\partial \Omega} \right)^{1/2}
\end{align}
where the hidden constant takes the form $\bigl( 1 + C\delta \| \kappa\|_{L^\infty(\partial\Omega)} \bigr)^{1/2}$.
Taking $\delta \sim h$ gives the desired estimate.
\end{proof}

\begin{lem}[Coercivity] For $\beta>0$ sufficiently large the form 
$A_h$ is coercive 
\begin{equation}\label{eq:coercivity}
\tn v \tn_{h}^2 \lesssim A_h(v,v) \qquad v \in V
\end{equation}
where $V = H^2(\Omega)$.  
\end{lem}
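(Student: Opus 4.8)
The plan is to expand $A_h(v,v)$ using \eqref{eq:Ah}--\eqref{eq:bh}, isolate the single indefinite consistency contribution, and absorb it into the positive terms by means of the continuity bound already established in Lemma~\ref{lem:const-bound}. Setting $w=v$ in \eqref{eq:Ah} gives
\begin{equation}
A_h(v,v) = \| v \|_{a_h}^2 - 2 (n\cdot \nabla v, v)_{\partial \Omega} + \beta \| v \|_{b_h}^2,
\end{equation}
so the only obstruction to coercivity is the cross term $2(n\cdot\nabla v, v)_{\partial\Omega}$, which is not sign definite.

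First I would apply Lemma~\ref{lem:const-bound} with $w=v$, producing a constant $C_2$ (of the stated form $(1+C\delta\|\kappa\|_{L^\infty(\partial\Omega)})^{1/2}$) such that
\begin{equation}
\left| (n\cdot\nabla v, v)_{\partial\Omega} \right| \leq C_2 \, \| v \|_{a_h} \, \| v \|_{b_h}.
\end{equation}
The essential point is that this mixed estimate bounds the boundary normal derivative by the \emph{product} of the bulk norm $\|v\|_{a_h}$ and the boundary norm $\|v\|_{b_h}$, rather than by $\|v\|_{a_h}^2$ alone. This is exactly what removes the need for an inverse inequality on the finite element space and is what makes coercivity available on all of $V=H^2(\Omega)$.

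Next I would apply the weighted Young inequality $2C_2 ab \leq \frac{1}{2}a^2 + 2C_2^2 b^2$ with $a=\|v\|_{a_h}$, $b=\|v\|_{b_h}$, which yields
\begin{equation}
A_h(v,v) \geq \frac{1}{2}\| v \|_{a_h}^2 + \bigl(\beta - 2C_2^2\bigr) \| v \|_{b_h}^2.
\end{equation}
Choosing $\beta$ sufficiently large, for instance $\beta \geq 2C_2^2 + \frac{1}{2}$, makes both coefficients at least $\frac{1}{2}$ and gives $A_h(v,v) \geq \frac{1}{2}\tn v\tn_h^2$, which is precisely \eqref{eq:coercivity}. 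Since $C_2$ depends only on $\delta\|\kappa\|_{L^\infty(\partial\Omega)}$, the admissible threshold for $\beta$ stays bounded under the standing assumption $\delta\leq\delta_0$, independently of the mesh and of how the boundary cuts the mesh, in agreement with the claim that a moderate penalty suffices.

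I do not expect any genuine obstacle in the coercivity argument itself: all the analytic difficulty — trading the boundary normal derivative against the interior least squares term and the tangential gradient through Green's formula and the extension estimates of Lemma~\ref{lem:aux1} — has already been absorbed into Lemma~\ref{lem:const-bound}. The only thing to watch is to track $C_2$ carefully enough to exhibit an explicit admissible $\beta$, and to verify that the splitting in Young's inequality leaves strictly positive coefficients on both $\|v\|_{a_h}^2$ and $\|v\|_{b_h}^2$ so that the full energy norm, and not merely one of its two pieces, is controlled.
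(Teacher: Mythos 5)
Your proposal is correct and follows essentially the same route as the paper: expand $A_h(v,v)$, bound the consistency cross term via Lemma~\ref{lem:const-bound}, and absorb it with a weighted Young's inequality before choosing $\beta$ large enough. The only difference is cosmetic --- you fix the Young weight concretely ($\gamma = 1/(2C_2)$) where the paper leaves $\gamma$ as a free parameter with coefficients $(1-\gamma c)$ and $(\beta - \gamma^{-1} c)$.
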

\begin{rem} Note that the coercivity with respect to the energy norm holds on the full space $V = H^2(\Omega)$ and not only 
on the discrete space $V_h$. The reason is of course that we do not invoke any inverse inequality 
in the proof of the continuity for the consistency form (Lemma~\ref{lem:const-bound}).
\end{rem}
\begin{proof} By standard calculations, the continuity for the consistency form (Lemma~\ref{lem:const-bound}) and a Young's inequality ($2ab \leq \gamma a^2 + \gamma^{-1}b^2$ for any $a,b,\gamma>0$) we have 
\begin{align}
A_h(v,v) &=\| v \|^2_{a_h}
- 2(n\cdot \nabla v, v)_{\partial \Omega} 
+\beta \| v \|^2_{b_h}
\\
&\geq
\| v \|^2_{a_h}
- 2 \left| (n\cdot \nabla v, v)_{\partial \Omega} \right|
+\beta \| v \|^2_{b_h}
\\
&\geq 
\| v \|^2_{a_h} 
- 2 c \| v \|_{a_h} \| v \|_{b_h}
+\beta \| v \|^2_{b_h}
\\
&\geq
(1 - \gamma c ) \| v \|^2_{a_h}
+ ( \beta  - \gamma^{-1} c) \| v \|^2_{b_h}
\end{align}
where $c$ is the hidden constant in the continuity for the consistency form (\ref{eq:trace}).
\end{proof}

\begin{rem}  The constant $c$ is of very moderate size in fact it is close to one 
and approaches one as the mesh size tends to zero. Therefore the constant $\beta$ 
may be chosen to have a moderate value, for instance we may take $5 \leq \beta$.
\end{rem}

\begin{lem}[Continuity] The form $A_h$ is continuous 
\begin{equation}\label{eq:continuity}
\left| A_h(v,w) \right| \lesssim \tn v \tn_h
\tn w \tn_{h} \qquad v,w \in V 
\end{equation}
\end{lem}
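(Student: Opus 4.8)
The plan is to estimate each of the four terms in the definition \eqref{eq:Ah} of $A_h$ separately and then collect them into a bound of the form $\tn v \tn_h \tn w \tn_h$. The key structural observation is that $a_h$ and $b_h$ are symmetric, positive semidefinite bilinear forms, so each satisfies a Cauchy--Schwarz inequality with respect to its own induced seminorm, and that by the definition \eqref{eq:energy-norm} of the energy norm we have $\| v \|_{a_h} \leq \tn v \tn_h$ and $\| v \|_{b_h} \leq \tn v \tn_h$ for every $v \in V$. These two facts reduce the whole estimate to a term-by-term computation.

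First I would treat the two symmetric bulk contributions. Since $a_h$ is an inner product, Cauchy--Schwarz gives $\left| a_h(v,w) \right| \leq \| v \|_{a_h} \| w \|_{a_h} \leq \tn v \tn_h \tn w \tn_h$, and the same argument applied to $b_h$ yields $\left| \beta b_h(v,w) \right| \leq \beta \| v \|_{b_h} \| w \|_{b_h} \leq \beta \, \tn v \tn_h \tn w \tn_h$. These bounds are immediate and contribute the constants $1$ and $\beta$.

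Next I would handle the two consistency terms, which is where the nontrivial work has already been carried out. Applying Lemma~\ref{lem:const-bound} directly gives $\left| (n\cdot \nabla v, w)_{\partial \Omega} \right| \lesssim \| v \|_{a_h} \| w \|_{b_h} \leq \tn v \tn_h \tn w \tn_h$, while applying the very same lemma with the roles of $v$ and $w$ interchanged gives $\left| (v, n\cdot \nabla w)_{\partial \Omega} \right| \lesssim \| w \|_{a_h} \| v \|_{b_h} \leq \tn v \tn_h \tn w \tn_h$. Summing the four estimates and using the triangle inequality then produces $\left| A_h(v,w) \right| \lesssim \tn v \tn_h \tn w \tn_h$, with a hidden constant of the moderate form $1 + \beta + C\bigl( 1 + C\delta \| \kappa \|_{L^\infty(\partial \Omega)} \bigr)^{1/2}$.

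I do not expect a genuine obstacle here: the continuity for the consistency form, Lemma~\ref{lem:const-bound}, is precisely the delicate ingredient, and it has already been established using the extension estimates of Lemma~\ref{lem:aux1}; once it is available, continuity of $A_h$ follows by routine applications of Cauchy--Schwarz. The only point worth emphasizing is that, exactly as for coercivity, no inverse inequality enters the argument, so the estimate is valid on all of $V = H^2(\Omega)$ rather than merely on the discrete space $V_h$.
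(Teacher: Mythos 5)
Your proposal is correct and follows essentially the same route as the paper: triangle inequality on the four terms of $A_h$, Cauchy--Schwarz for the $a_h$ and $b_h$ terms, and Lemma~\ref{lem:const-bound} (applied once as stated and once with $v$ and $w$ interchanged) for the two consistency terms. The only cosmetic difference is that the paper assembles the four bounds via a discrete Cauchy--Schwarz inequality to obtain the explicit constant $\max(2,1+\beta)$, whereas you sum the terms directly; both yield the claimed estimate.
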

\begin{proof} We have
\begin{align}
\left| A_h(v,w) \right| &\leq  \left| a_h(v,w) \right|
+ \left| (n\cdot \nabla v, w)_{\partial \Omega} \right|
+ \left| (v, n \cdot \nabla w )_{\partial \Omega} \right|
+ \beta \left| b_h(v,w) \right|
\\
&\lesssim \| v\|_{a_h} \| w\|_{a_h}  
+ \|v\|_{a_h} \|w\|_{b_h}  
+ \|v\|_{b_h} \|w\|_{a_h}
+ \beta \|v\|_{b_h} \|w\|_{b_h}
\\
&\leq (2\|v\|^2_{a_h} + (1+\beta)\| v \|^2_{b_h} )^{1/2} 
( 2 \|w\|^2_{a_h} + (1+\beta)\| w \|^2_{b_h} )^{1/2} 
\\
&\leq
\max\left(2,1+\beta\right)
\tn v \tn_h \tn w \tn_h
\end{align}
where we used Lemma~\ref{lem:const-bound} followed by the 
Cauchy-Schwarz inequality.
\end{proof}

\subsection{Interpolation Error Estimates}
\label{section:interpolation}

There is an extension operator $E:W^k_q(\Omega) \rightarrow W^k_q(\IR^d)$, $k\geq 0$ 
and $q\geq 1$, such that 
\begin{equation}\label{eq:extension-continuity}
\| E v \|_{W^k_q(\IR^d)} \lesssim \| v \|_{W^k_q(\Omega)}
\end{equation}
see \cite{Fo95}. Define the interpolant by 
\begin{equation}\label{eq:interpolant-def}
\pi_{h} : H^s(\Omega) \ni u \mapsto \pi_{Cl,h} ( E u ) \in V_h  
\end{equation} 
where $\pi_{Cl,h}$ is a Clement type interpolation operator 
onto the spline space $V_h$.
By including the extension operator $E$ in the definition of the interpolation operator \eqref{eq:interpolant-def} we can utilize interpolation results on full elements, also in the case when an element is cut by the boundary.
We have the standard elementwise a priori error 
estimate 
\begin{equation}\label{eq:interpol-basic}
\| v - \pi_h v \|_{H^m(T)} \lesssim h^{s - m } \| v \|_{H^s(\mcN_h(T))}
\end{equation}
where $\mcN_h(T) = \cup_{\varphi \in B(T)} \supp(\varphi)$ and 
$B(T) = \{ \varphi \in B : T \subset \supp(\varphi)\}$.  See \cite{IGA-h-refined} for interpolation 
results for spline spaces. 
\begin{lem}
We have the interpolation estimate 
\begin{equation}\label{eq:interpol-energy}
\tn v - \pi_h v \tn_h \lesssim h^p \| v \|_{H^{p+1}(\Omega)}
\end{equation}
\end{lem}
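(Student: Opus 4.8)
The plan is to expand $\tn v - \pi_h v\tn_h^2$ into its four constituent pieces according to \eqref{eq:energy-norm} and bound each separately; throughout I write $e = v - \pi_h v$. The two bulk contributions $\|\nabla e\|^2_\Omega$ and $\tau \delta^2 \|\Delta e\|^2_{\mcT_{h,\delta}\cap\Omega}$ follow directly from the elementwise estimate \eqref{eq:interpol-basic} with $s = p+1$. Summing the $H^1$-part over the elements gives $\|\nabla e\|^2_\Omega \lesssim h^{2p}\|v\|^2_{H^{p+1}(\Omega)}$, while the $H^2$-part together with $\delta \sim h$ gives $\delta^2\|\Delta e\|^2 \lesssim h^2 \cdot h^{2(p-1)}\|v\|^2_{H^{p+1}(\Omega)} = h^{2p}\|v\|^2_{H^{p+1}(\Omega)}$. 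In both sums one uses the finite overlap of the patches $\mcN_h(T)$ to collapse the local $H^{p+1}(\mcN_h(T))$ norms back into the single global norm $\|v\|_{H^{p+1}(\Omega)}$.

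The boundary contributions require a trace inequality, and this is where the real work lies. First I would invoke, on each full element $T$, a cut-element trace inequality of the form $\|w\|^2_{\partial\Omega\cap T} \lesssim h^{-1}\|w\|^2_T + h\|\nabla w\|^2_T$, valid for $w \in H^1(T)$. This is precisely where building the extension operator $E$ into the definition \eqref{eq:interpolant-def} of $\pi_h$ pays off: since $e = Ev - \pi_{Cl,h}(Ev)$ is a well-defined function on all of $T$, the trace inequality applies without any control on how $\partial\Omega$ slices the element. Taking $w = e$, summing over the cut elements, and using $\delta \sim h$ yields $\delta^{-1}\|e\|^2_{\partial\Omega} \lesssim h^{-2}\|e\|^2_\Omega + \|\nabla e\|^2_\Omega$, and both terms reduce to $h^{2p}\|v\|^2_{H^{p+1}(\Omega)}$ by \eqref{eq:interpol-basic} (the first because $h^{-2}\cdot h^{2(p+1)} = h^{2p}$).

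For the tangential term I would first discard the projection, writing $\|\nabla_T e\|_{\partial\Omega} = \|P\nabla e\|_{\partial\Omega} \leq \|\nabla e\|_{\partial\Omega}$ since $P$ has operator norm one, and then apply the same trace inequality to the components of the field $\nabla e$, giving $\|\nabla e\|^2_{\partial\Omega\cap T} \lesssim h^{-1}\|\nabla e\|^2_T + h\|\nabla^2 e\|^2_T$. After multiplying by $\delta \sim h$ and summing, this produces $\delta\|\nabla_T e\|^2_{\partial\Omega} \lesssim \|\nabla e\|^2_\Omega + h^2\|\nabla^2 e\|^2_\Omega$, and once more both pieces are of order $h^{2p}\|v\|^2_{H^{p+1}(\Omega)}$, the second because $h^2 \cdot h^{2(p-1)} = h^{2p}$. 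Collecting the four estimates and taking square roots gives the claimed bound.

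The main obstacle is not any single computation but securing the cut-element trace inequality in a form that is uniform in how $\partial\Omega$ meets the mesh; having folded the Sobolev extension into the interpolant removes exactly this difficulty, since every norm above is taken over a full element and no hypothesis on the cut geometry enters. One should also verify that $v \in H^{p+1}(\Omega)$ with $p \geq 2$ suffices to give meaning to all the traces of $e$ and $\nabla e$ on $\partial\Omega$, which it does.
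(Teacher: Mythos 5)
Your proposal is correct and follows essentially the same route as the paper's proof: the bulk terms are handled directly by the elementwise interpolation estimate \eqref{eq:interpol-basic}, and both boundary terms are treated with the cut-element trace inequality $\| w \|^2_{\partial \Omega} \lesssim h^{-1}\| w \|^2_{\mcT_h(\partial\Omega)} + h \| \nabla w \|^2_{\mcT_h(\partial\Omega)}$ applied to $v-\pi_h v$ and its gradient, with the extension operator built into $\pi_h$ doing exactly the job you describe. Your explicit remark that $\|\nabla_T e\|_{\partial\Omega}\leq\|\nabla e\|_{\partial\Omega}$ since $P$ has operator norm one is a small step the paper leaves implicit, but otherwise the two arguments coincide.
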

\begin{proof} Where needed, let the extension operator be implied, i.e., $v=Ev$.
We directly have the estimates 
\begin{align}
\|\nabla ( v - \pi_h v )\|^2_{\Omega} &\lesssim h^{2p} \| v \|^2_{H^{p+1}(\Omega)}
\\
h^2 \|\Delta (v - \pi_h v )\|^2_\Omega &\lesssim h^{2p} \| v \|^2_{H^{p+1}(\Omega)}
\end{align}
For the boundary terms we employ the trace 
inequality 
\begin{equation}
\| v \|^2_{\partial \Omega} \lesssim 
h^{-1} \|v \|^2_{\mcT_h(\partial \Omega)} + h \| \nabla v \|^2_{\mcT_h(\partial \Omega)} 
\end{equation}
see \cite{HaHaLa03}, which together with the interpolation 
bound (\ref{eq:interpol-basic}) and the stability of the extension operator (\ref{eq:extension-continuity}) give
\begin{align}
h^{-1} \| v - \pi_h v  \|^2_{\partial \Omega} &\lesssim h^{2} \| v - \pi_h v  \|^2_{\mcT_h(\partial \Omega)} 
+ \| \nabla (v  - \pi_h v) \|^2_{\mcT_h(\partial \Omega)} 
\\
&\lesssim h^{2p} \| v \|^2_{H^{p+1}(\mcN_{h}(\mcT_h(\partial \Omega) )}
\\
&\lesssim h^{2p} \| v \|^2_{H^{p+1}(\Omega)}
\\
h \| \nabla_T (v - \pi_h v ) \|^2_{\partial \Omega} 
&\lesssim  \| \nabla (v - \pi_h v)  \|^2_{\mcT_h(\partial \Omega)} 
+ h^2 \| \nabla^2 (v  - \pi_h v) \|^2_{\mcT_h(\partial \Omega)} 
\\
&\lesssim h^{2p} \| v \|^2_{H^{p+1}(\mcN_{h}(\mcT_h(\partial \Omega) )}
\\
&\lesssim h^{2p} \| v \|^2_{H^{p+1}(\Omega)}
\end{align}
Summing these four bounds we directly obtain (\ref{eq:interpol-energy}).
\end{proof}
\subsection{Error Estimates}
\begin{thm}
The following error estimates holds
\begin{align}\label{eq:errorest-energy}
\tn u - u_h \tn_h &\lesssim h^p \| u \|_{H^{p+1}(\Omega)}
\\
\label{eq:errorest-L2}
\| u - u_h \|_{\Omega} &\lesssim h^{p+1}\| u \|_{H^{p+1}(\Omega)}
\end{align}
\end{thm}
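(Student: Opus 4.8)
The plan is to prove the two estimates in the standard Galerkin-error framework, relying on the coercivity, continuity, and interpolation lemmas already established, together with Galerkin orthogonality \eqref{eq:galort}. For the energy estimate \eqref{eq:errorest-energy}, I would first split the error as $u - u_h = (u - \pi_h u) + (\pi_h u - u_h)$, where $\pi_h u - u_h \in V_h$. Coercivity \eqref{eq:coercivity} applied to the discrete part $\pi_h u - u_h$ gives $\tn \pi_h u - u_h \tn_h^2 \lesssim A_h(\pi_h u - u_h, \pi_h u - u_h)$. I would then insert $u$ via Galerkin orthogonality, writing $A_h(\pi_h u - u_h, \pi_h u - u_h) = A_h(\pi_h u - u, \pi_h u - u_h)$ since $A_h(u - u_h, \pi_h u - u_h) = 0$. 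Applying continuity \eqref{eq:continuity} bounds this by $\tn \pi_h u - u \tn_h \, \tn \pi_h u - u_h \tn_h$, and after cancelling one factor of $\tn \pi_h u - u_h \tn_h$ we obtain $\tn \pi_h u - u_h \tn_h \lesssim \tn u - \pi_h u \tn_h$. A triangle inequality combined with the interpolation estimate \eqref{eq:interpol-energy} then yields \eqref{eq:errorest-energy}.

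For the $L^2$ estimate \eqref{eq:errorest-L2}, I would use the Aubin--Nitsche duality argument. Let $\phi$ solve the adjoint problem $-\Delta \phi = u - u_h$ in $\Omega$ with homogeneous Dirichlet data $\phi = 0$ on $\partial\Omega$; by the elliptic regularity assumption \eqref{eq:regularity} with $s = 2$ we have $\| \phi \|_{H^2(\Omega)} \lesssim \| u - u_h \|_\Omega$. The key identity is to express $\| u - u_h \|_\Omega^2 = (u - u_h, -\Delta \phi)_\Omega$ and, using Green's formula together with the consistency of the formulation, rewrite this as $A_h(u - u_h, \phi)$. Galerkin orthogonality then lets me subtract the interpolant, giving $\| u - u_h \|_\Omega^2 = A_h(u - u_h, \phi - \pi_h \phi)$. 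Continuity of $A_h$ bounds this by $\tn u - u_h \tn_h \, \tn \phi - \pi_h \phi \tn_h$, and applying the interpolation estimate \eqref{eq:interpol-energy} to $\phi$ (with $p = 1$, i.e. $\tn \phi - \pi_h \phi \tn_h \lesssim h \| \phi \|_{H^2(\Omega)}$) together with the energy estimate \eqref{eq:errorest-energy} and the regularity bound gives $\| u - u_h \|_\Omega^2 \lesssim h^p \| u \|_{H^{p+1}(\Omega)} \cdot h \| u - u_h \|_\Omega$, from which \eqref{eq:errorest-L2} follows after dividing.

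The main obstacle I anticipate is the careful verification that $\| u - u_h \|_\Omega^2 = A_h(u - u_h, \phi)$ in the duality step, since $A_h$ contains the extra least squares and tangential-gradient terms beyond the classical Nitsche form. One must check that when the exact dual solution $\phi$ is inserted, the boundary terms and the stabilization terms either vanish (because $\phi = 0$ and $u$ satisfies its boundary condition on $\partial\Omega$) or reproduce exactly the consistent contributions, so that the identity reduces to $(u - u_h, -\Delta\phi)_\Omega$ plus terms that cancel. In particular the least squares bulk term $\tau\delta^2(\Delta(u-u_h), \Delta\phi)_{\mcT_{h,\delta}\cap\Omega}$ must be shown to contribute at the correct order; since $\delta \sim h$ this term scales like $h^2$ and is absorbed harmlessly, but one should confirm it does not spoil the optimal $h^{p+1}$ rate. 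Everything else is a routine assembly of the already-proven coercivity, continuity, and interpolation lemmas.
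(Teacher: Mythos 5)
Your proposal is correct and follows essentially the same route as the paper, which only sketches the argument as coercivity plus Galerkin orthogonality plus continuity plus interpolation for the energy estimate, and a standard duality argument for the $L^2$ estimate. You have also correctly identified and resolved the one subtlety the paper glosses over, namely that inserting the dual solution $\phi$ into $A_h$ leaves a residual term $\tau\delta^2(\Delta(u-u_h),\Delta\phi)_{\mcT_{h,\delta}\cap\Omega}$ which, since $\delta\sim h$, is bounded by $h\,\tn u-u_h\tn_h\,\|\phi\|_{H^2(\Omega)}$ and hence does not spoil the optimal rate.
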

\begin{proof} Estimate (\ref{eq:errorest-energy}) follows directly from 
the coercivity, the Galerkin orthogonality (\ref{eq:galort}), the continuity 
and finally the interpolation error estimate.  Estimate (\ref{eq:errorest-L2}) 
follows from (\ref{eq:errorest-energy}) using a standard duality argument.
\end{proof}

\section{Conditioning of the Discrete System}
When constructing a robust fictitious domain method we must take into account the following two stability issues that are induced in some cut situations:
\begin{itemize}
\item
\emph{Loss of Coercivity.} Coercivity is a most fundamental property of PDEs in weak form, which, for instance, is used to prove existence and uniqueness of solutions to elliptic PDEs.
When proving coercivity there is some flexibility in the choice of norm to work in
and notably our present choice, the energy norm \eqref{eq:energy-norm}, is closely related to the continuous problem and hence includes no information outside of the domain.

\item
\emph{Unbounded Condition Number.}
When proving a bound on the condition number a norm on the coefficients in the approximation space is used. In most cut situations this norm includes degrees of freedoms associated with nodes outside of the domain and for that reason this norm cannot be bounded by the energy norm alone.
Therefore, as shown in the next section, guaranteed coercivity in the energy norm does not imply a bound on the condition number.

\end{itemize}
Often, the remedy of both these symptoms are treated using the same medicine, for example by adding ghost penalty \cite{Bu10} or a small amount of stiffness in cut elements outside of the domain \cite{PaDu07}. In the present work we take a different approach, treating the symptoms separately, where the coercivity is guaranteed by adding suitable terms to the method, whereas the condition number is ensured to be bounded via basis removal as described next.

\subsection{Basis Function Removal} \label{section:basis-removal}

The stiffness matrix $\hatA$ is defined by
\begin{equation}\label{eq:stiffnessmatrix}
(\hatA \hatv,\hatw)_{\IR^N} = A_h(v,w)\qquad \forall v,w \in V_h
\end{equation}
where $\hatv \in \IR^N$ is the coefficient vector in the expansion 
\begin{equation}
v(x) = \sum_{i\in I} \hatv_i \varphi_i(x)
\end{equation}
of $v$ in terms of the basis functions. We note that $\hatA$ is symmetric 
and positive semidefinite since
\begin{equation}
(\hatA \hatv,\hatv)_{\IR^N} = A_h(v,v) \gtrsim \tn v \tn^2_h \geq 0 
\end{equation}
Note however that $\tn v \tn_h$ is only a semi norm on $V_h$ since the functions 
in $V_h$ are defined on $\Omega_h = \cup_{i \in I} \supp(\varphi_i)$ and 
$\Omega \subset \Omega_h$ and there may be basis functions $\varphi \in B$ 
such that the intersection $\supp(\varphi) \cap \Omega$ is arbitrarily small. 

To obtain a positive definite stiffness matrix we apply basis function removal.
This approach was analyzed in \cite{ElfLarLar18} and builds 
on the the idea of systematically removing basis functions that have sufficiently small 
intersection with the domain. This is done in such a way that optimal order accuracy in 
a specified norm is retained. For instance, using the energy norm we may remove basis functions according to the following procedure.
Assuming the basis functions in $B = \{ \varphi_i \}_{i=1}^N$ are sorted such that the size of their energy norms $\tn \varphi_i \tn_h$ are ascending we may remove the $N_r$ first basis functions as long as
\begin{equation}
\sum_{i=1}^{N_r} \tn \varphi_i \tn_h^2 \leq tol^2
\end{equation}
with $tol = c h^{p}$. See Figure~\ref{fig:basis-removal} for an example selection of basis functions.
This natural procedure leads to optimal order convergence 
and a stiffness matrix which is uniformly symmetric positive definite independent of 
the position of the domain in the background mesh. We refer to \cite{ElfLarLar18} for 
further details.  The resulting linear system of equations may then be solved using a direct 
solver or we may apply an iterative solver combined with a preconditioner. We refer to 
\cite{PreVerBru17} and \cite{PreVerZwe17} for preconditioning of cut element methods.

\begin{figure}
\centering
\begin{subfigure}[t]{.3\linewidth}
\includegraphics[width=\linewidth]{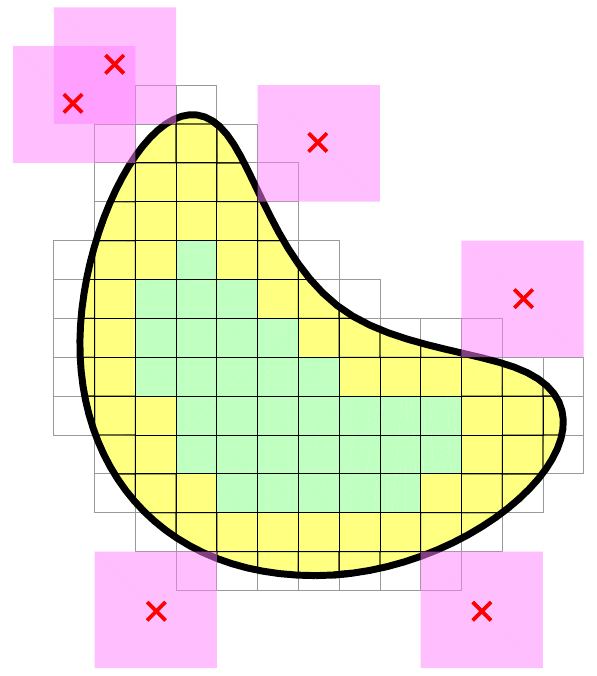}
\end{subfigure}
\caption{Illustration of quadratic B-spline basis functions selected for basis removal. For each selected basis function the location of its peak is indicated by a red cross and its support is indicated by a pink square.}
\label{fig:basis-removal}
\end{figure}

\section{Numerical Results}

\paragraph{Implementation.}
The new least squares stabilized Nitsche method in 2D is implemented in MATLAB and the linear system of equations is solved using a direct solver (MATLAB's \verb+\+ operator).
We use tensor product quadratic B-spline basis functions, i.e. $C^1$-splines, in all experiments. The domain $\Omega$ is described as a high resolution polygon. The additional terms, with regards to the standard symmetric Nitsche method which we formulate below, are added elementwise.

\paragraph{Parameter Values.}
In our experiments we let $\delta=h$ and extract the subdomain $\mcT_{h,\delta} \cap \Omega$ according to Remark~\ref{rem:Th-practice}. We use $\beta=10$ while the value of $\tau>0$ is varied. When basis removal is active we use a tolerance constant $c=0.01$ as described in Section~\ref{section:basis-removal}.

\paragraph{The Standard Symmetric Nitsche Method.} For comparison we also include results for the standard symmetric Nitsche method defined via the forms
\begin{align}
A_{h,\mathrm{std}}(v,w) &= (\nabla v, \nabla w)_\Omega
-(n\cdot\nabla v, w)_{\partial\Omega} -(v, n\cdot\nabla w)_{\partial\Omega}
\\&\qquad + h^{-1}(\beta(2+\tau^{-1})v,w)_{\partial\Omega} \nonumber
\\
L_{h,\mathrm{std}}(v) &= (f,v)_\Omega - (g, n\cdot\nabla v)_{\partial\Omega} + h^{-1}(\beta(2+\tau^{-1})g,v)_{\partial\Omega}
\end{align}
To make for an easier comparison the Nitsche penalty parameter is written $\beta(2+\tau^{-1})$ just as in our formulation of the least squares stabilized method \eqref{eq:method}.

\begin{rem} \label{rem:std-nitsche}
In the results below we in no way ensure coercivity of the standard symmetric Nitsche method. That could be performed using the techniques outlined in the introduction, for example by suitably increasing the penalty parameter. Thus, in examples below where the standard Nitsche method appears to perform poorly one could argue that this is a consequence of poor parameter choices for this method. However, we still found it interesting to include this comparison.
\end{rem}

\subsection{Model Problems}

We manufacture both our model problems based on the ansatz
\begin{align}
u(x,y) = \frac{1}{10}(\sin(2x) + x\cos(3y))
\end{align}
from which we, given a domain $\Omega$, derive the input data $f$ in $\Omega$ and $g,\nabla_T g$ on $\partial\Omega$.

\paragraph{Unit Square.} The first geometry we consider is the unit square $\Omega = [0,1]^2$ on which we examine both fitted meshes and cut meshes generated in a controlled fashion. The cut situations are created by letting the topmost and rightmost elements extend outside the domain a distance $h\delta_\mathrm{cut}$. This is further described and illustrated in Figure~\ref{fig:unit-square-meshes}. An example numerical solution to this model problem is shown in Figure~\ref{fig:unit-square-solution}. Note that we do not activate basis removal in the examples based on this model problem.

\begin{figure}
\centering
\begin{subfigure}[t]{.26\linewidth}\centering
\includegraphics[width=\linewidth]{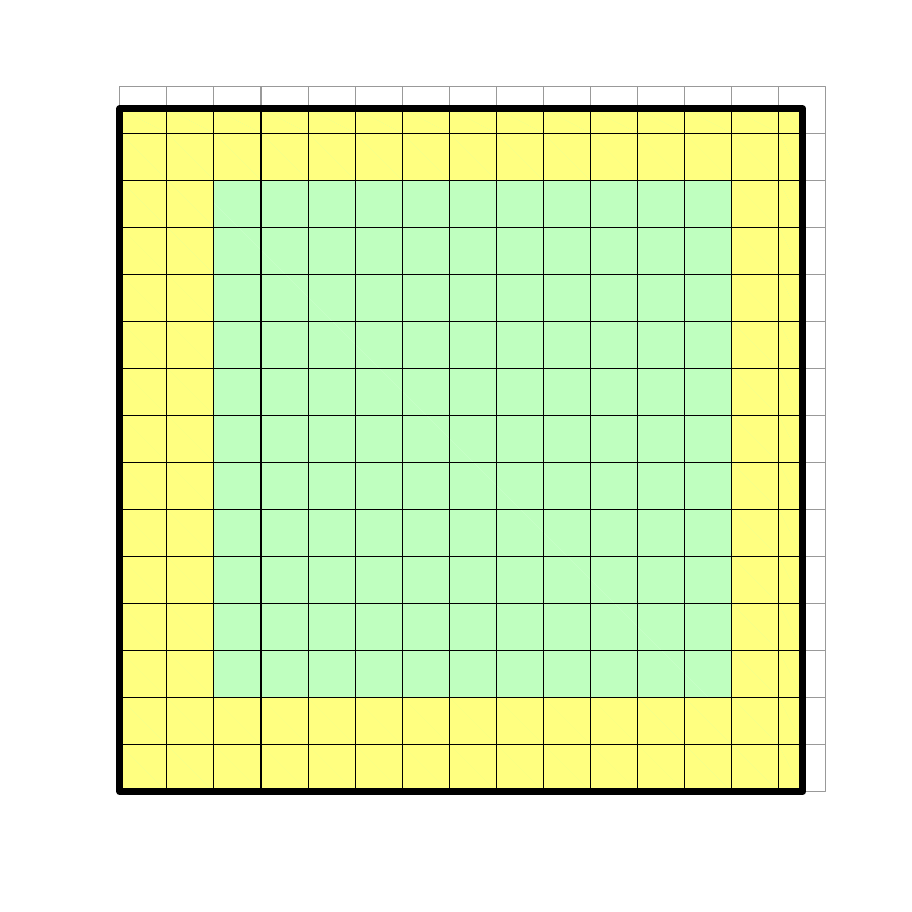}
\caption{$15\times 15$ mesh, $\delta_\mathrm{cut}=0.5$}
\end{subfigure}
\quad
\begin{subfigure}[t]{.3\linewidth}\centering
\includegraphics[width=\linewidth]{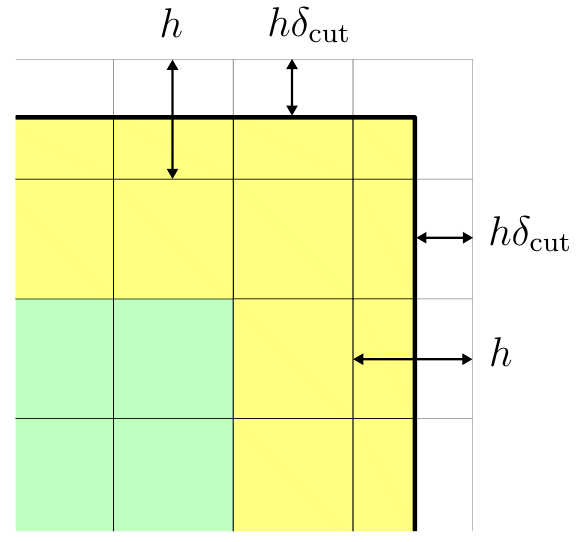}
\caption{Corner detail, $\delta_\mathrm{cut}=0.5$}
\end{subfigure}
\caption{Example mesh for the unit square model problem. Here the geometry cuts through the mesh in a controlled fashion where the topmost and rightmost elements (aside from the corner) have a proportion $\delta_\mathrm{cut}$ of their areas outside the domain. The case $\delta_\mathrm{cut}=0$ thus corresponds to a perfectly fitted mesh. The yellow part of the domain is $\mcT_{h,\delta} \cap \Omega$.}
\label{fig:unit-square-meshes}
\end{figure}

\begin{figure}
\centering
\begin{subfigure}[t]{.4\linewidth}\centering
\includegraphics[width=\linewidth]{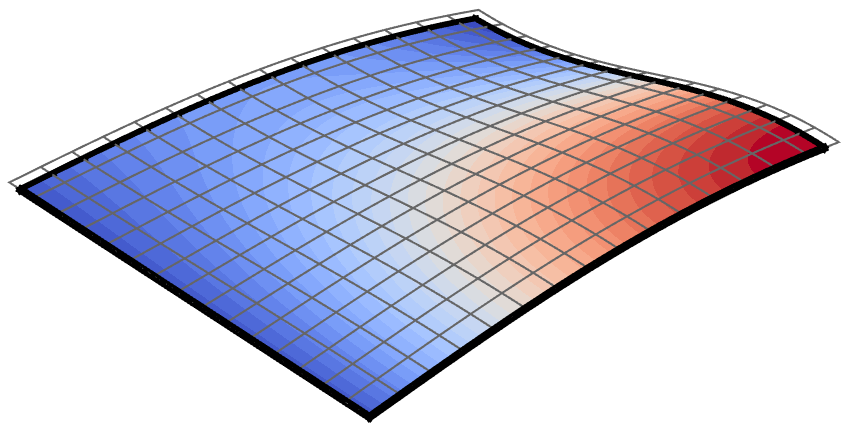}
\caption{Solution $u_h$}
\end{subfigure}
\quad
\begin{subfigure}[t]{.4\linewidth}\centering
\includegraphics[width=\linewidth]{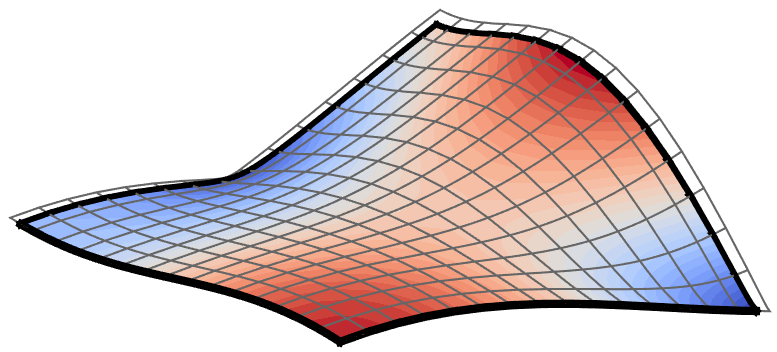}
\caption{Gradient magnitude $|\nabla u_h|$}
\end{subfigure}
\caption{Example numerical solution to the unit square model problem using the least squares stabilized Nitsche method on the mesh in Figure~\ref{fig:unit-square-meshes}.}
\label{fig:unit-square-solution}
\end{figure}

\paragraph{Unit Circle.}
As a second geometry we consider the unit circle $\Omega = \{ x \in \mathbb{R}^2 \ : \ \| x \|_{\mathbb{R}^2} \leq 1 \}$. This geometry will always produce cut situations on structured meshes. We produce a variety of cut situations by taking a background grid of size $h$ and shifting it $(th,th/3)$ where $t$ is a parameter. By letting the parameter $t$ take 100 values ranging from 0 to 1, we yield 100 different cut situations for each mesh size $h$. Some example meshes generated using this procedure are presented in Figure~\ref{fig:circle-shifted-meshes} and an example numerical solution to this model problem is shown in Figure~\ref{fig:unit-circle-solution}.

\begin{figure}
\centering
\begin{subfigure}[t]{.3\linewidth}\centering
\includegraphics[trim=30 30 20 20, clip, width=\linewidth]{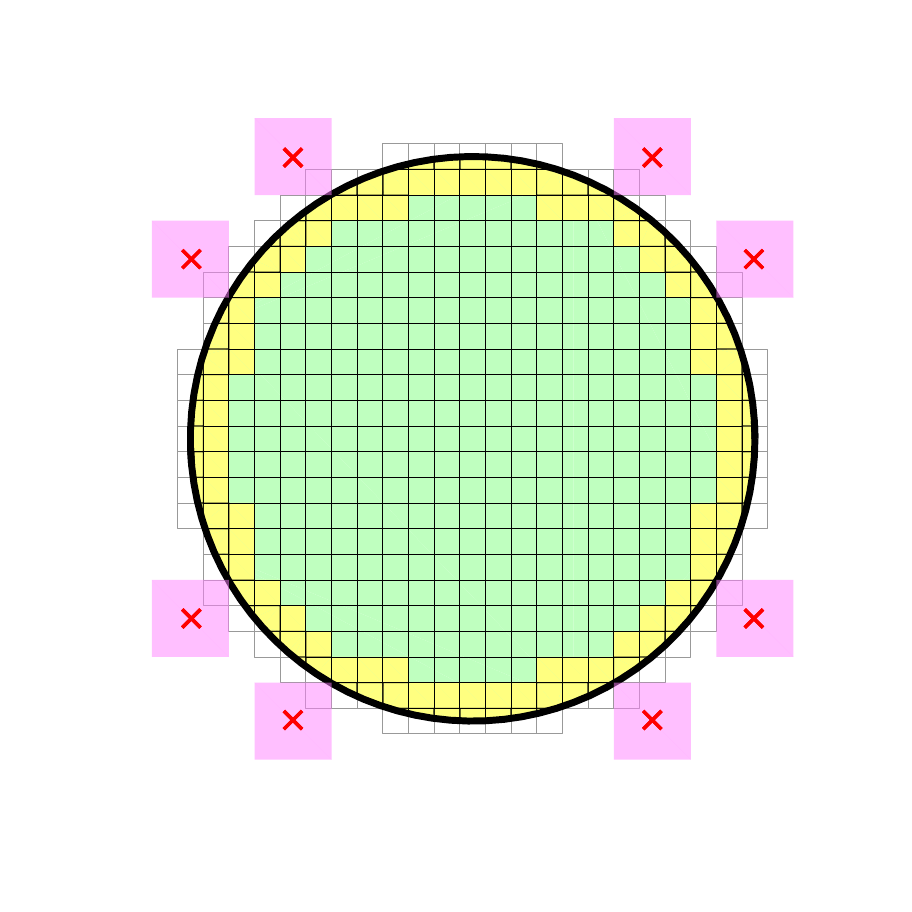}
\caption{$t=0$}
\end{subfigure}
\begin{subfigure}[t]{.3\linewidth}\centering
\includegraphics[trim=30 30 20 20, clip, width=\linewidth]{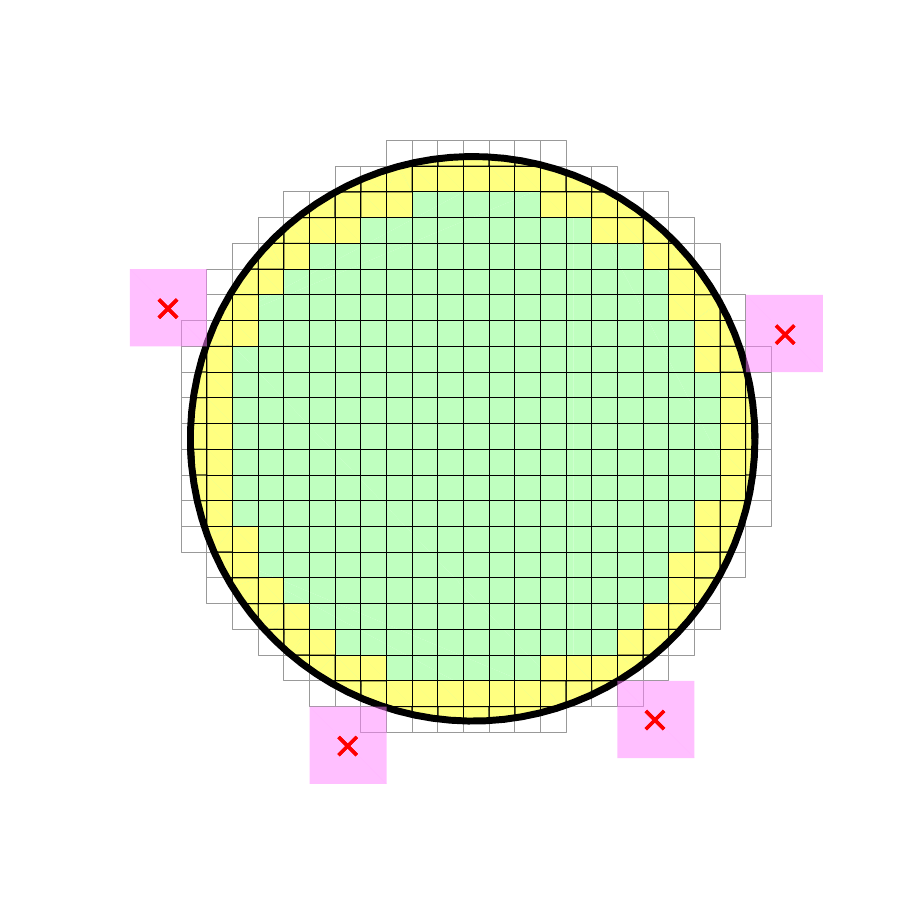}
\caption{$t=0.1$}\label{fig:that-circle-mesh}
\end{subfigure}
\begin{subfigure}[t]{.3\linewidth}\centering
\includegraphics[trim=30 30 20 20, clip, width=\linewidth]{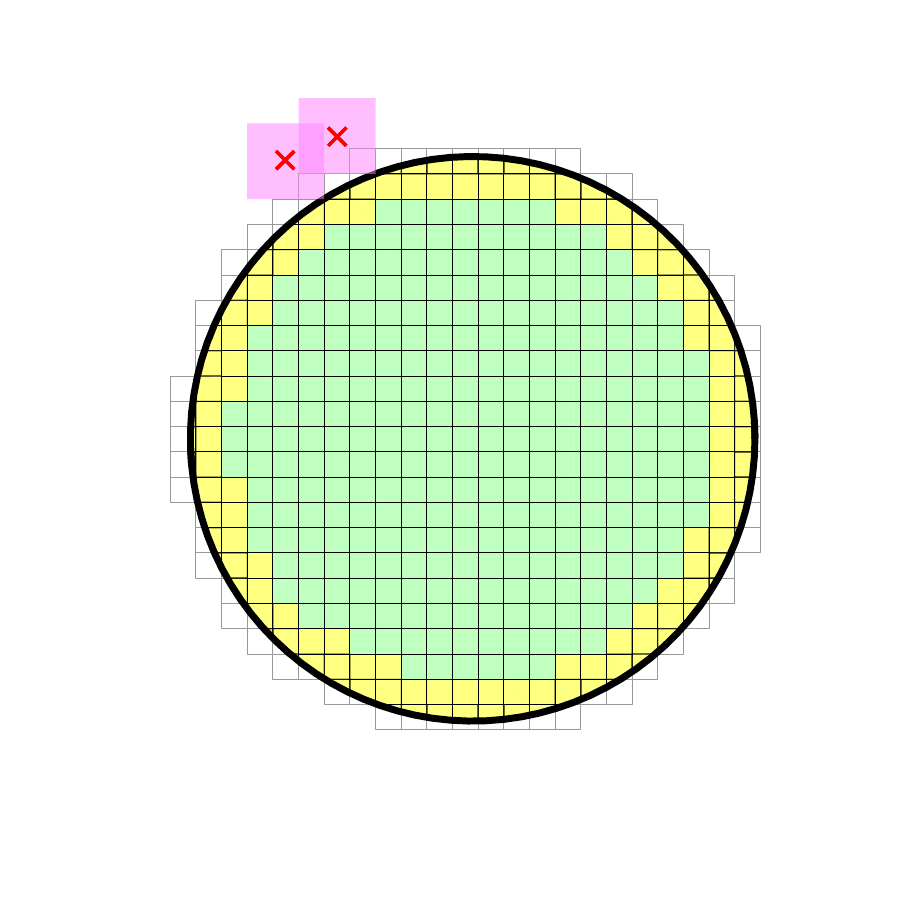}
\caption{$t=0.5$}
\end{subfigure}
\caption{Example meshes ($h=0.13$) for the unit circle. The background grid is shifted $(th,th/3)$ creating a variety of cut situations for each mesh size depending on the parameter $t$. The pink squares with a cross indicate the support of basis functions selected for basis removal using parameter values $c=0.01$ and $\tau=0.1$. The yellow part of the domain is $\mcT_{h,\delta} \cap \Omega$.}
\label{fig:circle-shifted-meshes}
\end{figure}

\begin{figure}
\centering
\begin{subfigure}[t]{.4\linewidth}\centering
\includegraphics[width=\linewidth]{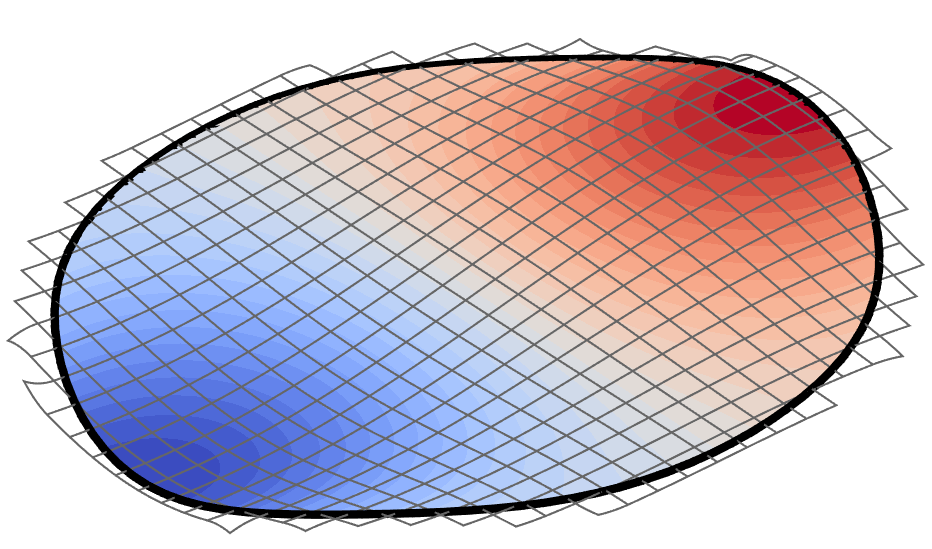}
\caption{Solution $u_h$}
\end{subfigure}
\quad
\begin{subfigure}[t]{.4\linewidth}\centering
\includegraphics[width=\linewidth]{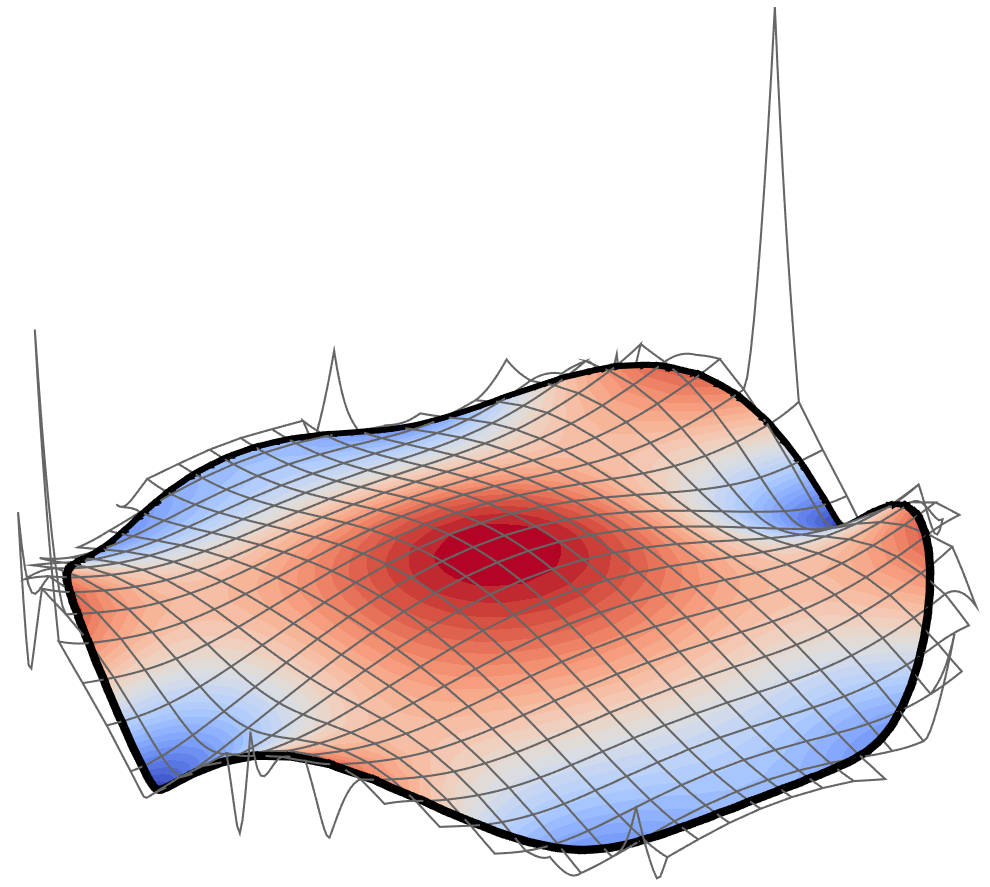}
\caption{Gradient magnitude $|\nabla u_h|$}
\end{subfigure}
\caption{Example numerical solution to the unit circle model problem using the least squares stabilized Nitsche method on the mesh in Figure~\ref{fig:that-circle-mesh}.}
\label{fig:unit-circle-solution}
\end{figure}

\subsection{Experiments}

\paragraph{Convergence Studies.}
We begin by studying convergence for the unit square problem on fitted meshes in Figure~\ref{fig:conv-fitted}. Both the least squares stabilized Nitsche method and the standard Nitsche method perform well in for the tested parameter values albeit we initially see a slightly higher $L^2(\Omega)$ error for the least squares stabilized Nitsche method for $\tau=1$. We attribute this to the $\mcT_{h,\delta}\cap\Omega$ least squares term which in addition to the imposed $h^2$ scaling also scales as the subdomain $\mcT_{h,\delta}\cap\Omega$ becomes smaller with $h$, explaining why we only note this on the coarsest meshes.

To study convergence on cut meshes we use the sequence of meshes specified for the unit circle problem. This gives access to a variety of cut situations and to illustrate the stability of the least squares stabilized Nitsche method we for each mesh size pick the largest errors among all available cut meshes, essentially producing a worst case scenario.
These convergence results are presented in Figure~\ref{fig:conv-cut-worst-case} where the least squares stabilized Nitsche method show remarkable stability. For $\tau=1$ we in the $L^2(\Omega)$ error initially note faster than expected convergence which we again attribute to the $\mcT_{h,\delta}\cap\Omega$ least squares term. For $\tau \geq 0.01$ the least squares stabilized Nitsche method and the standard Nitsche method produce visually indistinguishable convergence results. We note however that the size of the errors increase somewhat when further lowering $\tau$ and we investigate the performance of the methods for small values of $\tau$ in Figure~\ref{fig:conv-cut-worst-case-smaller-tau}.
A smaller value for $\tau$ equates to a larger effective Nitsche penalty $\beta(2+\tau^{-1})$ so we attribute this increasing error to locking due to inhomogeneous Dirichlet conditions and the fact that the boundary in the circle model problem is curved within each cut element. This latter property means that even homogeneous Dirichlet conditions cannot be exactly satisfied in our approximation space.

\begin{figure}
\centering
\begin{subfigure}[t]{.28\linewidth}
\includegraphics[trim=0 0 10 10, clip, width=\linewidth]{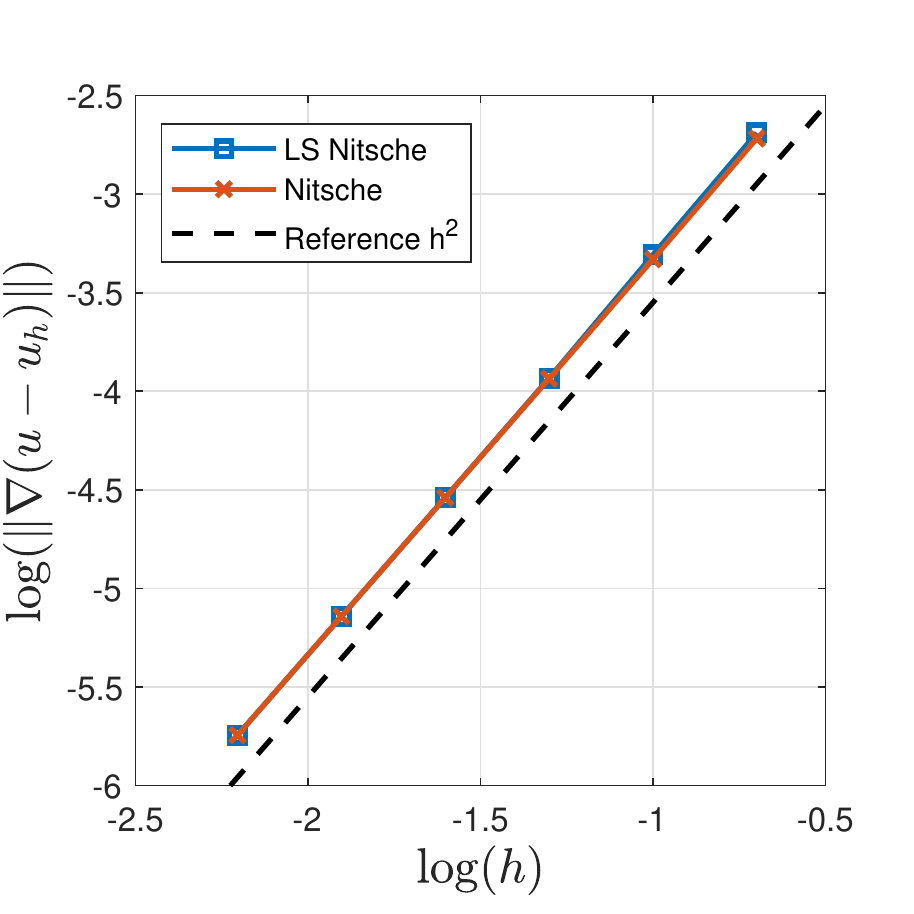}
\caption{$\tau=1$}
\end{subfigure}
\begin{subfigure}[t]{.28\linewidth}
\includegraphics[trim=0 0 10 10, clip, width=\linewidth]{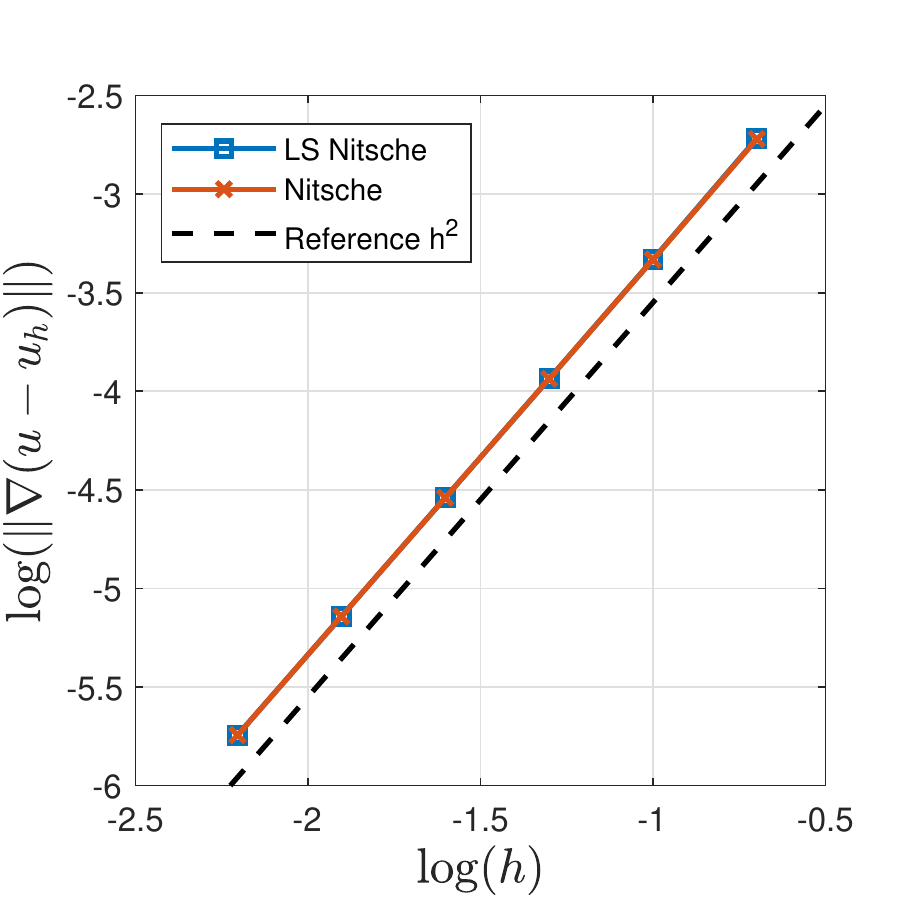}
\caption{$\tau=0.1$}
\end{subfigure}
\begin{subfigure}[t]{.28\linewidth}
\includegraphics[trim=0 0 10 10, clip, width=\linewidth]{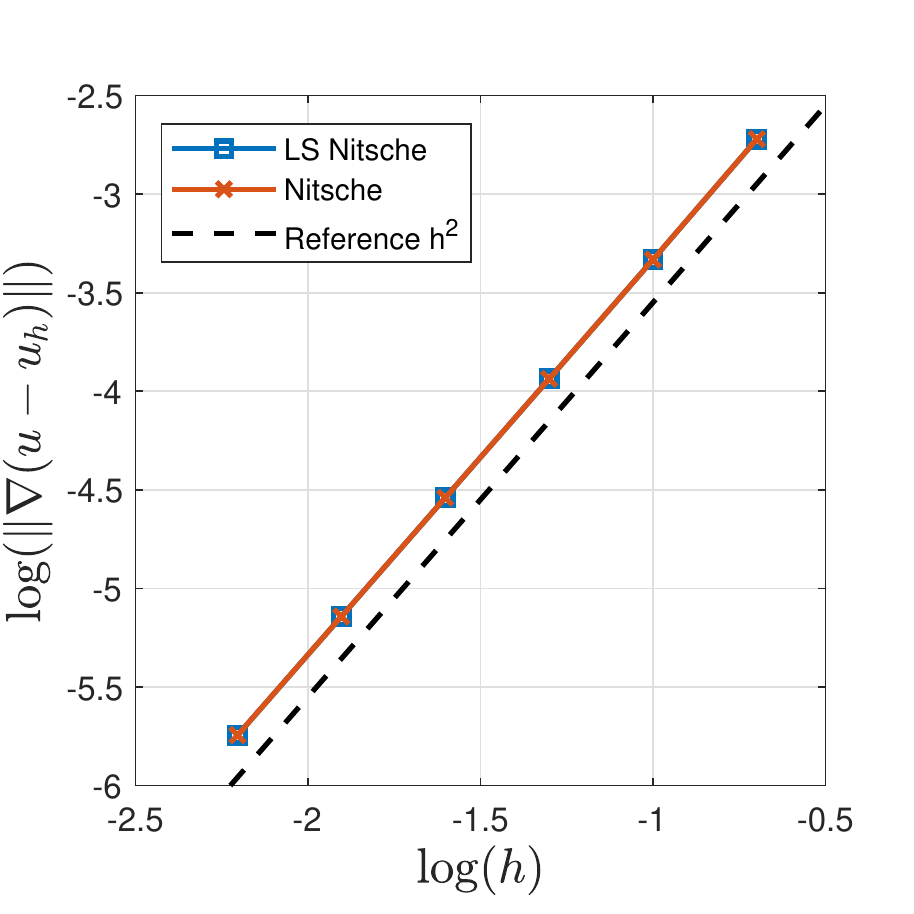}
\caption{$\tau=0.01$}
\end{subfigure}
\begin{subfigure}[t]{.28\linewidth}
\includegraphics[trim=0 0 10 10, clip, width=\linewidth]{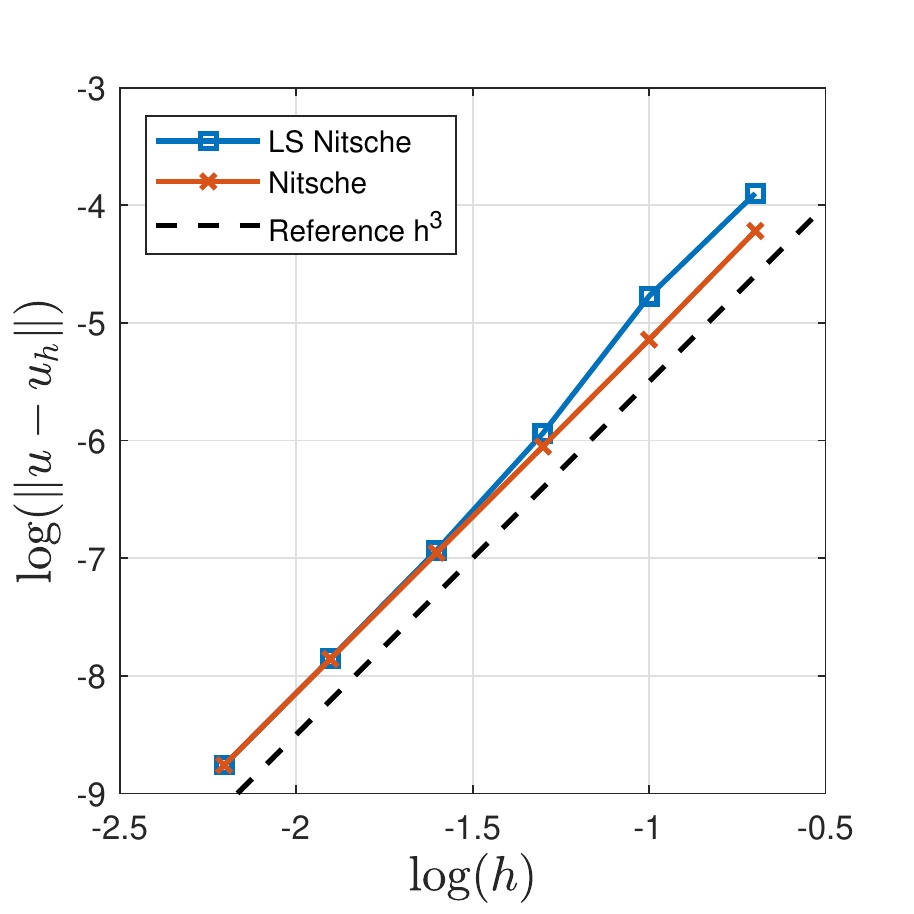}
\caption{$\tau=1$}
\end{subfigure}
\begin{subfigure}[t]{.28\linewidth}
\includegraphics[trim=0 0 10 10, clip, width=\linewidth]{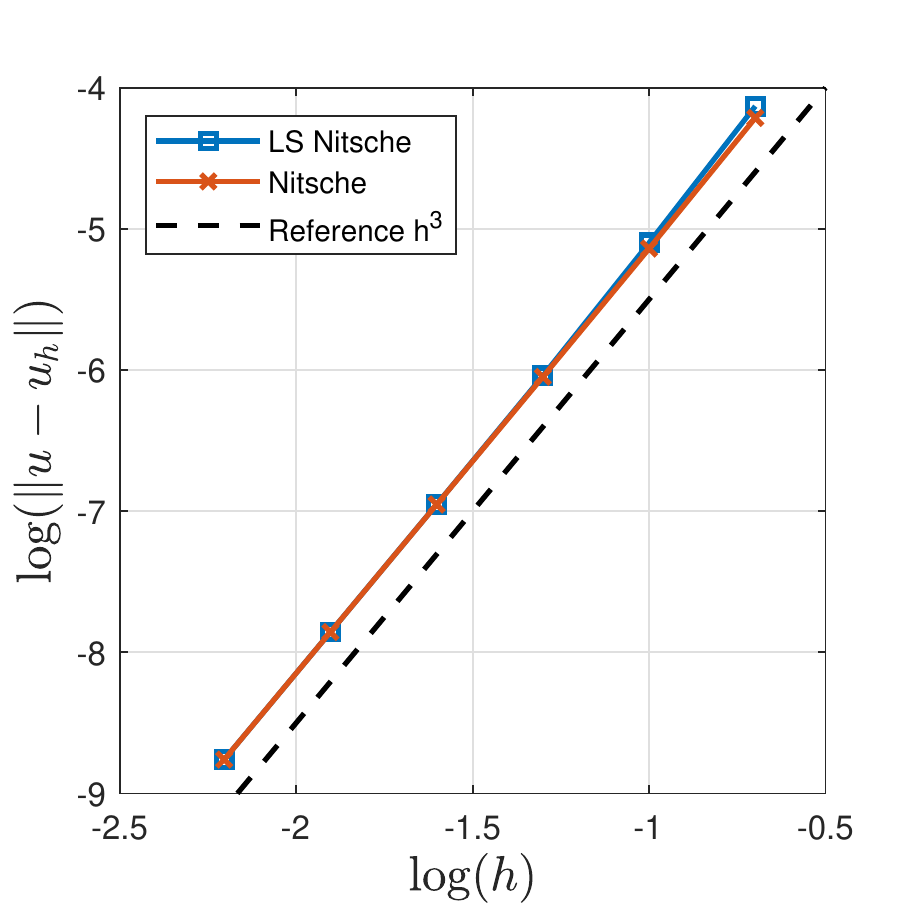}
\caption{$\tau=0.1$}
\end{subfigure}
\begin{subfigure}[t]{.28\linewidth}
\includegraphics[trim=0 0 10 10, clip, width=\linewidth]{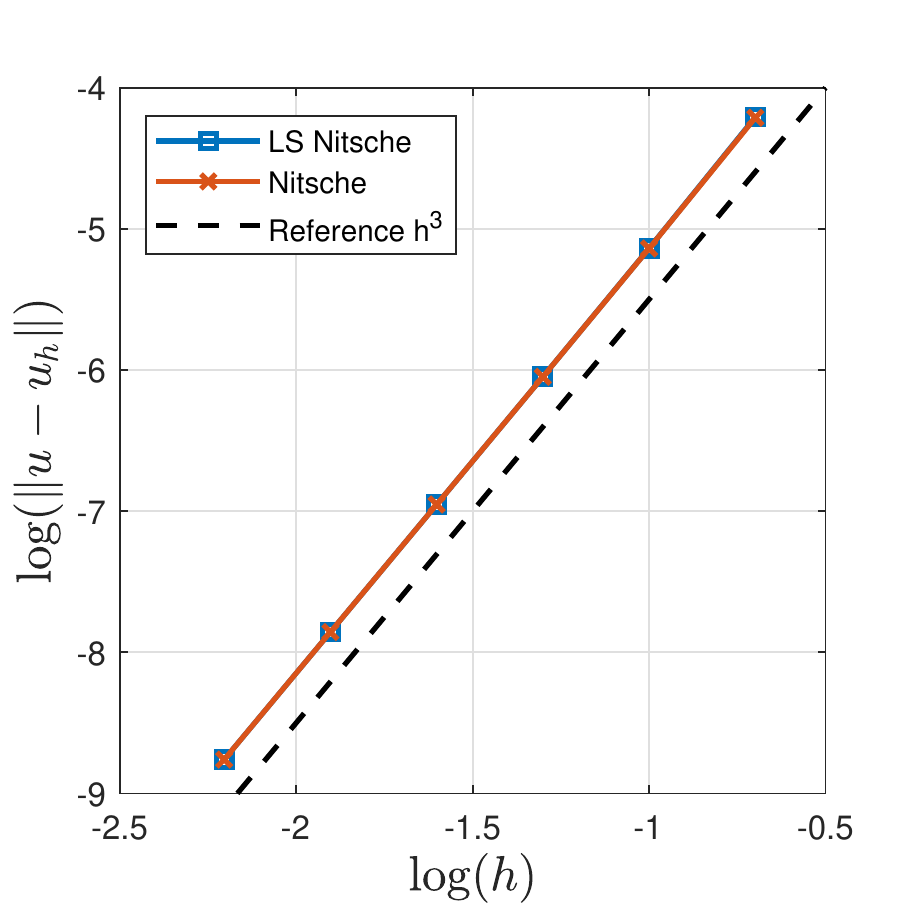}
\caption{$\tau=0.01$}
\end{subfigure}
\caption{Convergence results for the square model problem with a perfectly fitted mesh.}
\label{fig:conv-fitted}
\end{figure}

\begin{figure}
\centering
\begin{subfigure}[t]{.28\linewidth}
\includegraphics[trim=0 0 10 10, clip, width=\linewidth]{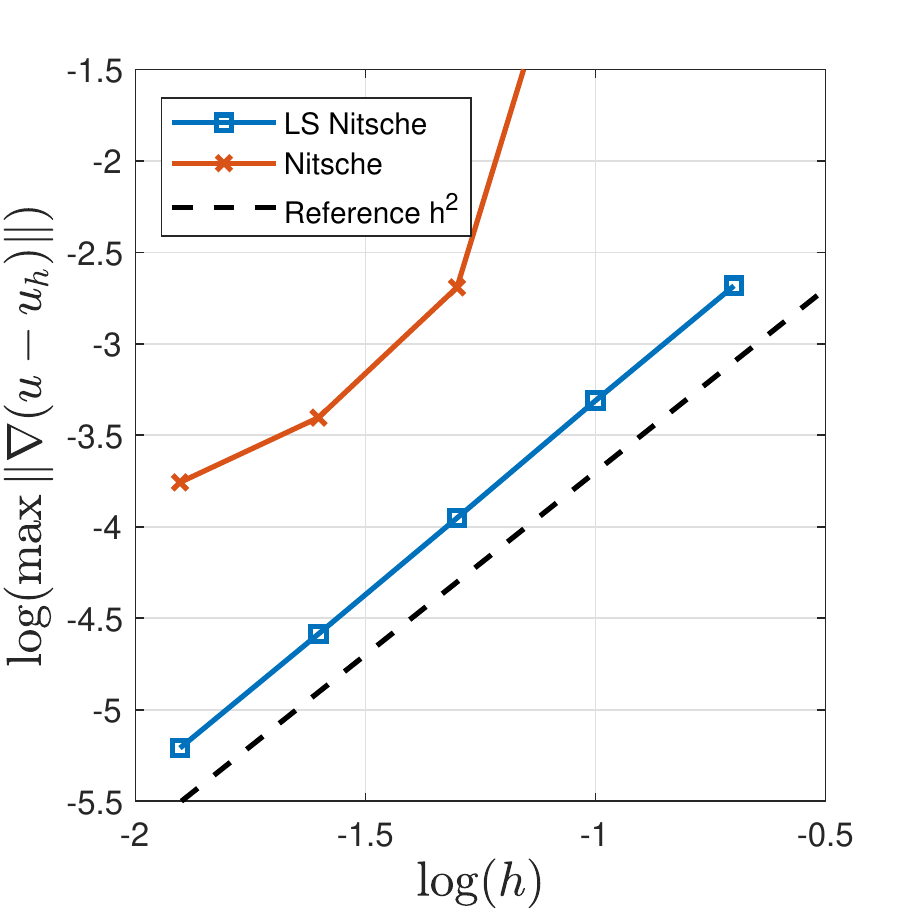}
\caption{$\tau=1$}
\end{subfigure}
\begin{subfigure}[t]{.28\linewidth}
\includegraphics[trim=0 0 10 10, clip, width=\linewidth]{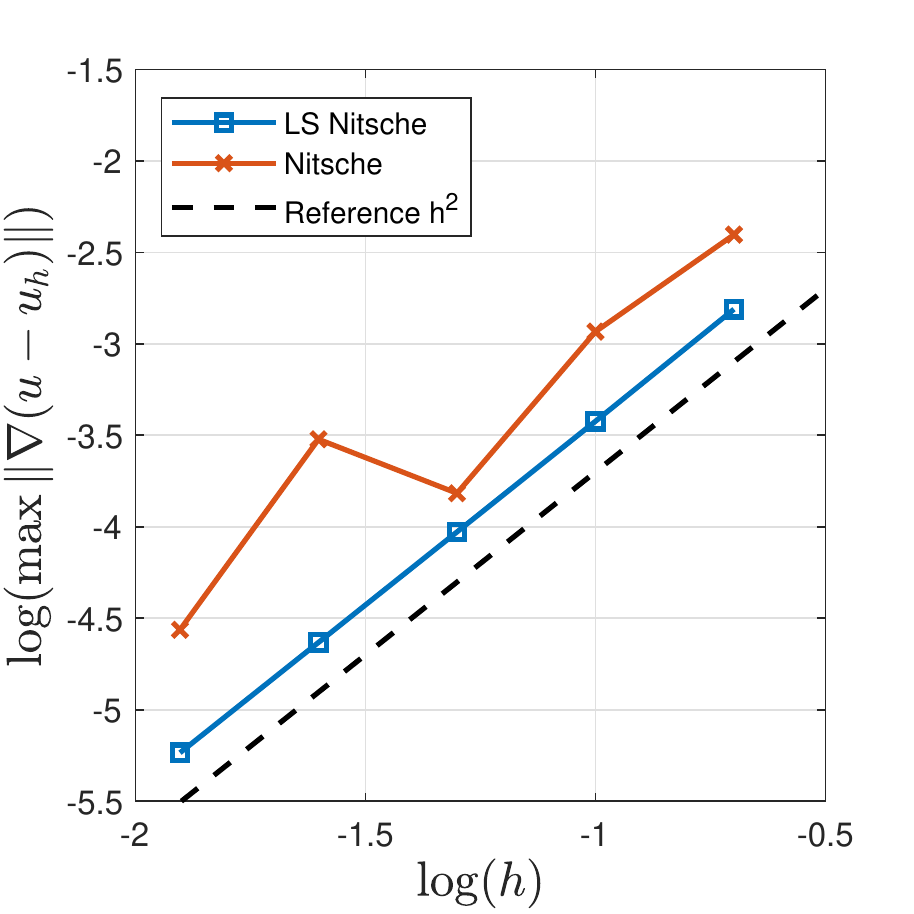}
\caption{$\tau=0.1$}
\end{subfigure}
\begin{subfigure}[t]{.28\linewidth}
\includegraphics[trim=0 0 10 10, clip, width=\linewidth]{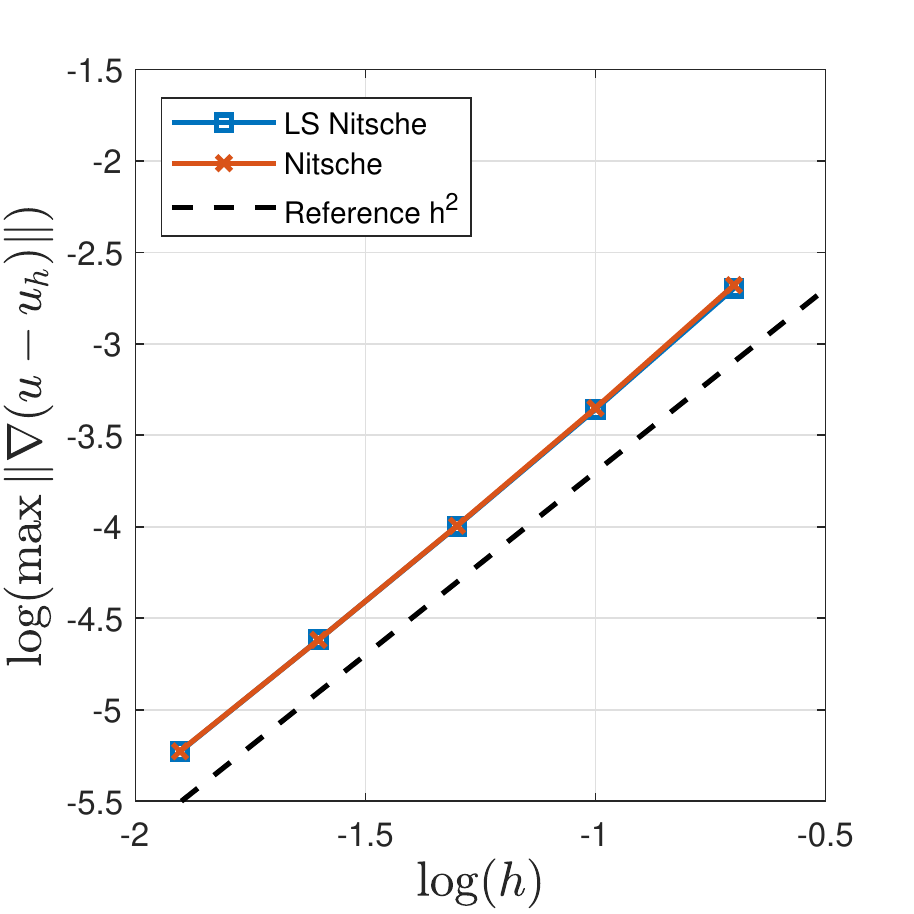}
\caption{$\tau=0.01$}
\end{subfigure}
\begin{subfigure}[t]{.28\linewidth}
\includegraphics[trim=0 0 10 10, clip, width=\linewidth]{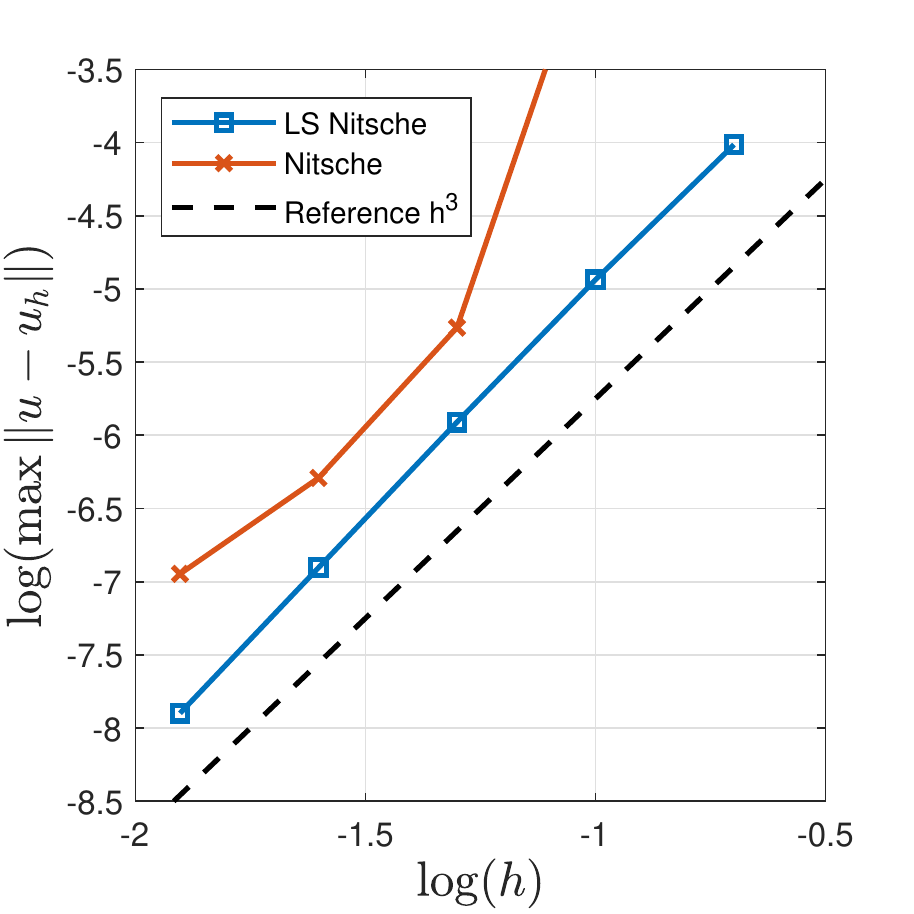}
\caption{$\tau=1$}
\end{subfigure}
\begin{subfigure}[t]{.28\linewidth}
\includegraphics[trim=0 0 10 10, clip, width=\linewidth]{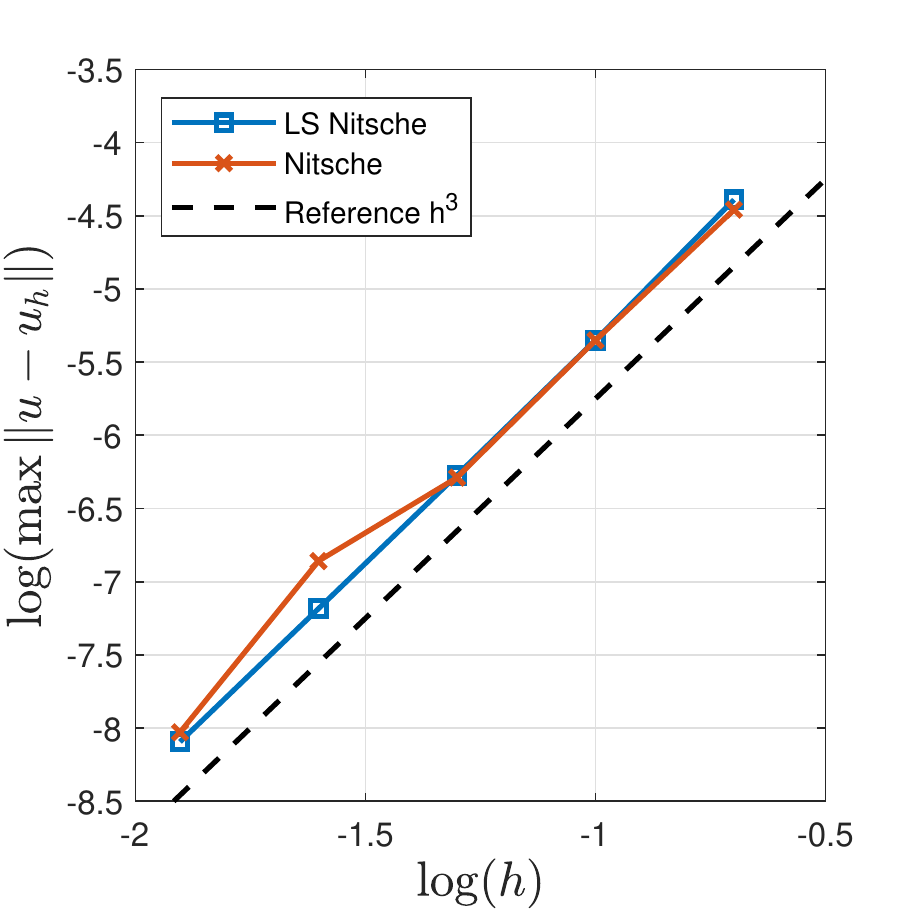}
\caption{$\tau=0.1$}
\end{subfigure}
\begin{subfigure}[t]{.28\linewidth}
\includegraphics[trim=0 0 10 10, clip, width=\linewidth]{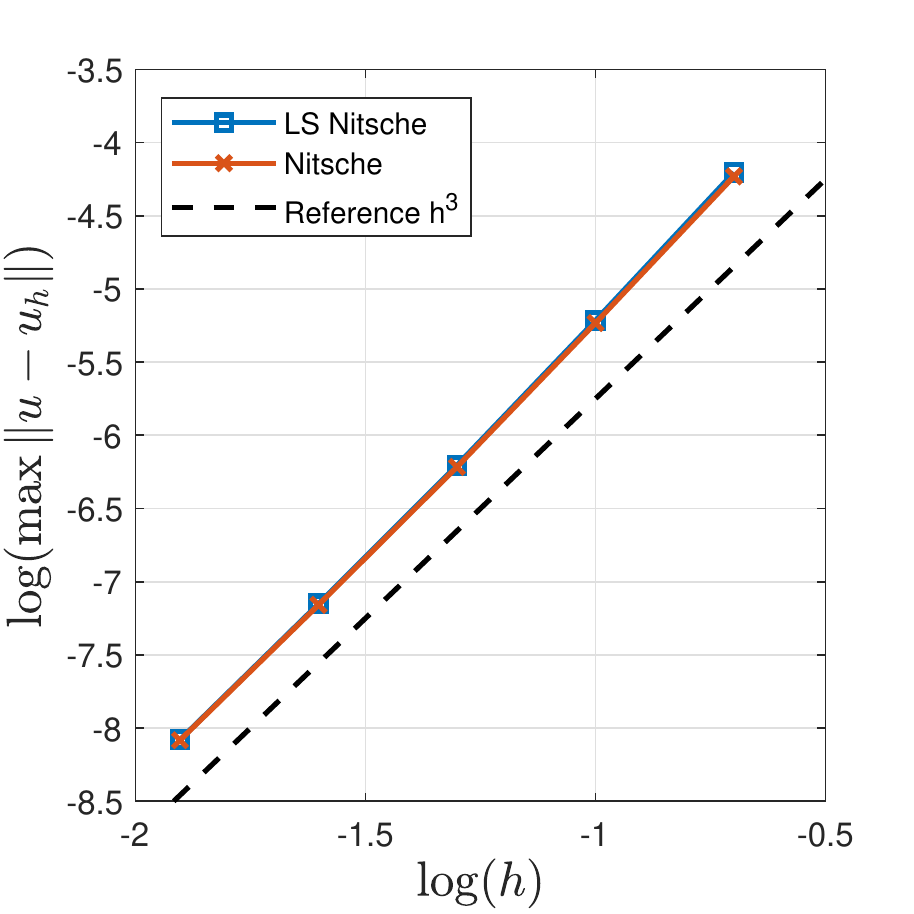}
\caption{$\tau=0.01$}
\end{subfigure}
\caption{Worst case convergence for the circle model problem on cut meshes produced by shifting the background grid to 100 different positions for each mesh size. We use $\beta=10$ and basis function removal is activated with $c=0.01$. Note that we haven't ensured coercivity for the standard Nitsche method in the worst case cut scenarios. In typical cases lack of coercivity can be remedied by increasing the penalty parameter, for example by making $\tau$ smaller, and we indeed notice improved stability of the standard Nitsche method with smaller $\tau$.}
\label{fig:conv-cut-worst-case}
\end{figure}

\begin{figure}
\centering
\begin{subfigure}[t]{.28\linewidth}
\includegraphics[trim=0 0 10 10, clip, width=\linewidth]{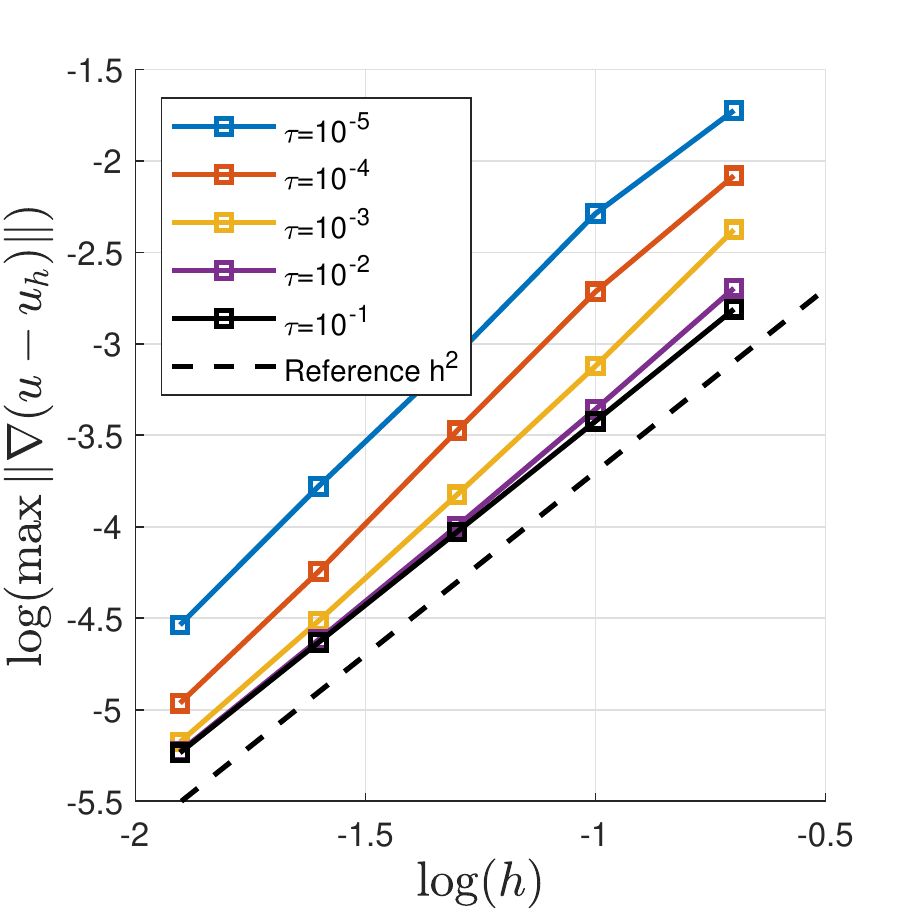}
\end{subfigure}
\begin{subfigure}[t]{.28\linewidth}
\includegraphics[trim=0 0 10 10, clip, width=\linewidth]{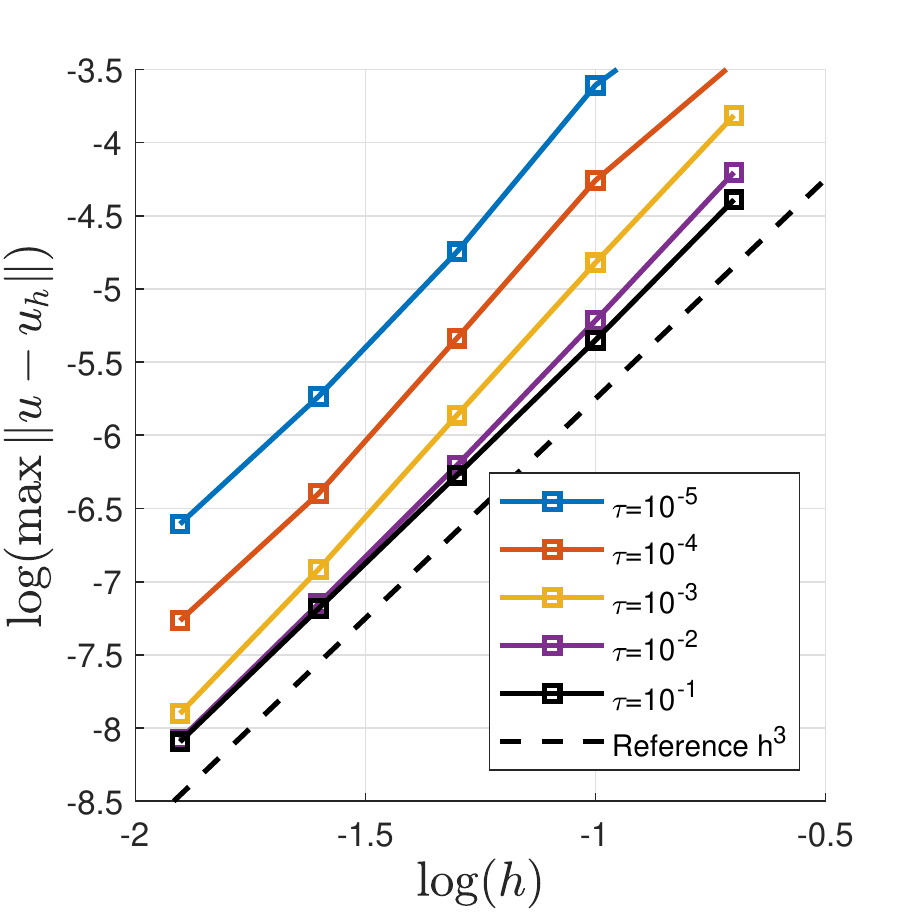}
\end{subfigure}
\begin{subfigure}[t]{.28\linewidth}
\includegraphics[trim=0 0 10 10, clip, width=\linewidth]{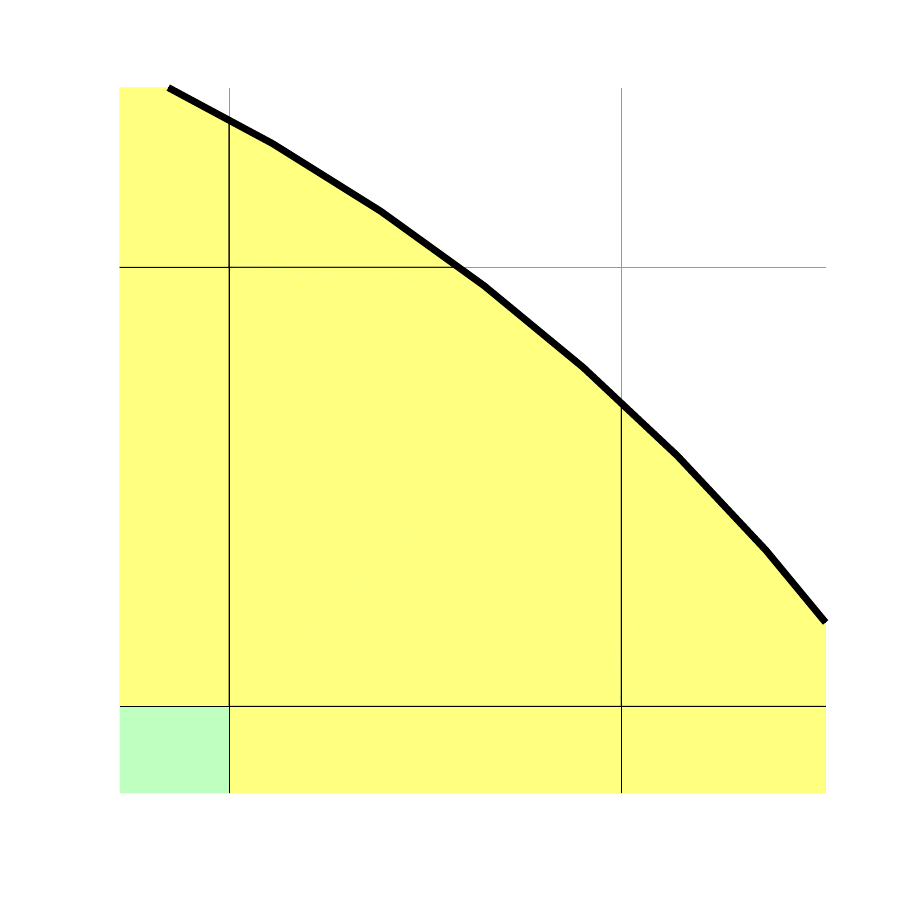}
\end{subfigure}
\caption{Worst case convergence using small values of $\tau$ in the least squares stabilized Nitsche method for the circle model problem. This corresponds to large values of the effective Nitsche parameter $\beta(2+\tau^{-1})$. We use $\beta=10$ and basis function removal is activated with $c=0.01$. The standard Nitsche method yields close to identical results for these values of $\tau$ with the exception of $\tau=0.1$ which is studied in Figure~\ref{fig:conv-cut-worst-case}. We attribute the increasing errors when lowering $\tau$ to locking which is pronounced by the boundary being curved within cut elements as shown in the mesh detail on the right.}
\label{fig:conv-cut-worst-case-smaller-tau}
\end{figure}

\paragraph{Condition Number Study.}
To illustrate the effect of basis function removal on the stiffness matrix condition number we use the unit circle model problem on a background grid of size $h=0.13$ and shift this background grid in 100 steps as described above to create a variety of cut situations. The results from this study are presented in Figure~\ref{fig:condest}. We note that the effect of basis function removal on the least squares stabilized Nitsche method seems stable for the tested values of $\tau$ while we note some minor instabilities when applying basis function removal on the standard Nitsche method for $\tau \in \{1,0.1\}$.

\begin{figure}
\centering
\begin{subfigure}[t]{.28\linewidth}
\includegraphics[trim=0 0 10 10, clip, width=\linewidth]{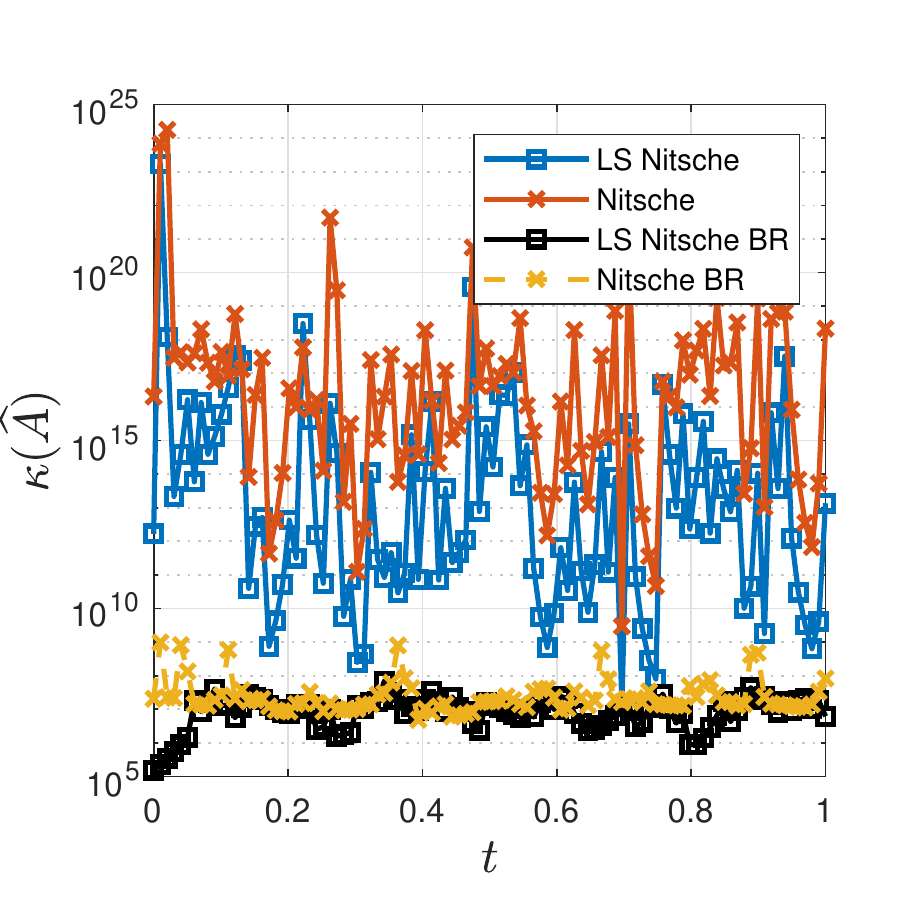}
\caption{$\tau=1$}
\end{subfigure}
\begin{subfigure}[t]{.28\linewidth}
\includegraphics[trim=0 0 10 10, clip, width=\linewidth]{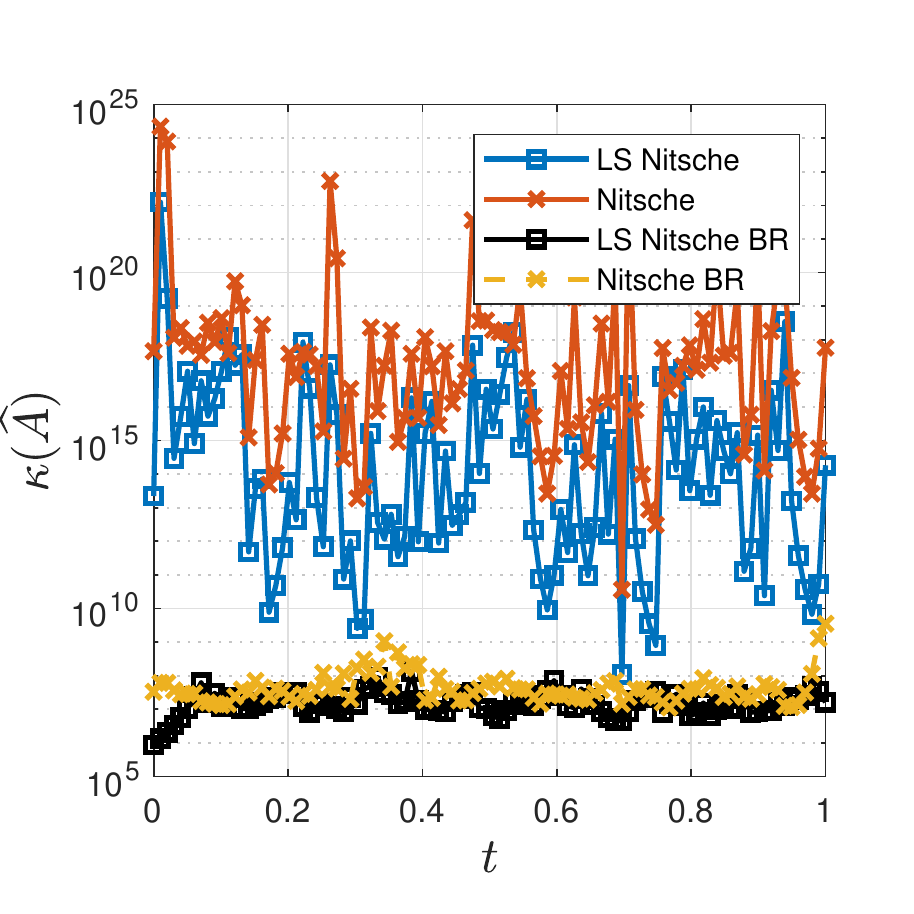}
\caption{$\tau=0.1$}
\end{subfigure}
\begin{subfigure}[t]{.28\linewidth}
\includegraphics[trim=0 0 10 10, clip, width=\linewidth]{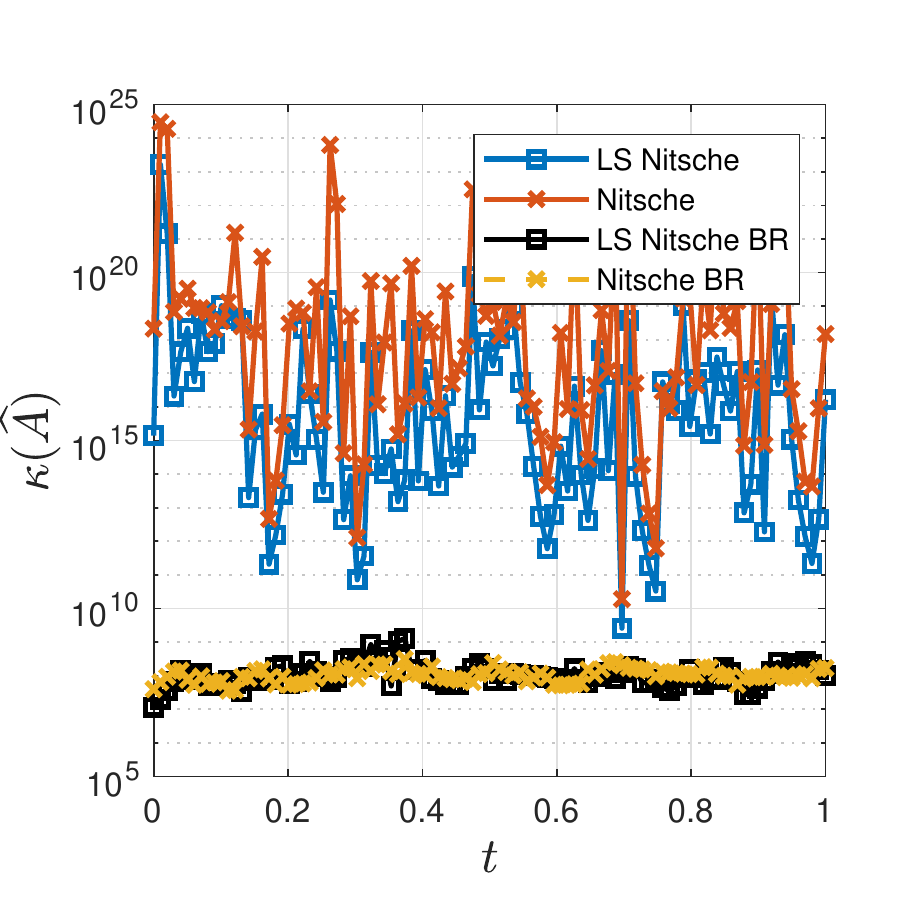}
\caption{$\tau=0.01$}
\end{subfigure}
\caption{Stiffness matrix condition numbers in the unit circle model problem where the background grid is shifted based on the parameter $t$ creating a variety of cut situations. Condition numbers after basis function removal with $c=0.1$ are marked ``BR".}
\label{fig:condest}
\end{figure}

\paragraph{Coercivity on $\boldsymbol H^{\boldsymbol 2}\boldsymbol(\boldsymbol\Omega\boldsymbol)$.}
The least squares stabilized Nitsche method is coercive on the full space $V=H^2(\Omega)$ rather than only on the finite element space $V_h$, which is conventionally the case for Nitsche methods. To illustrate this we keep the method fixed and study behavior of the smallest eigenvalues when refining the mesh. If the smallest eigenvalue is negative the method cannot be coercive.
We fix the method based on a $10\times 10$ mesh, meaning the parameter $h$ and subdomain $\mcT_{h,\delta}$ are fixed in the method \eqref{eq:method} and are thereby independent of the actual computational mesh, see Figure~\ref{fig:meshes-fixed-method}.
As expected the results in Figure~\ref{fig:smev-fixed-method} show that the least squares stabilized Nitsche method maintains a positive smallest eigenvalue in every investigated case. The size of the smallest eigenvalue however approaches zero. The standard Nitsche method is only coercive on $V_h$ and thus eventually attains negative smallest eigenvalues if the method is held fixed.

\begin{figure}
\centering
\begin{subfigure}[t]{.23\linewidth}\centering
\includegraphics[trim=15 15 10 10, clip, width=0.91\linewidth]{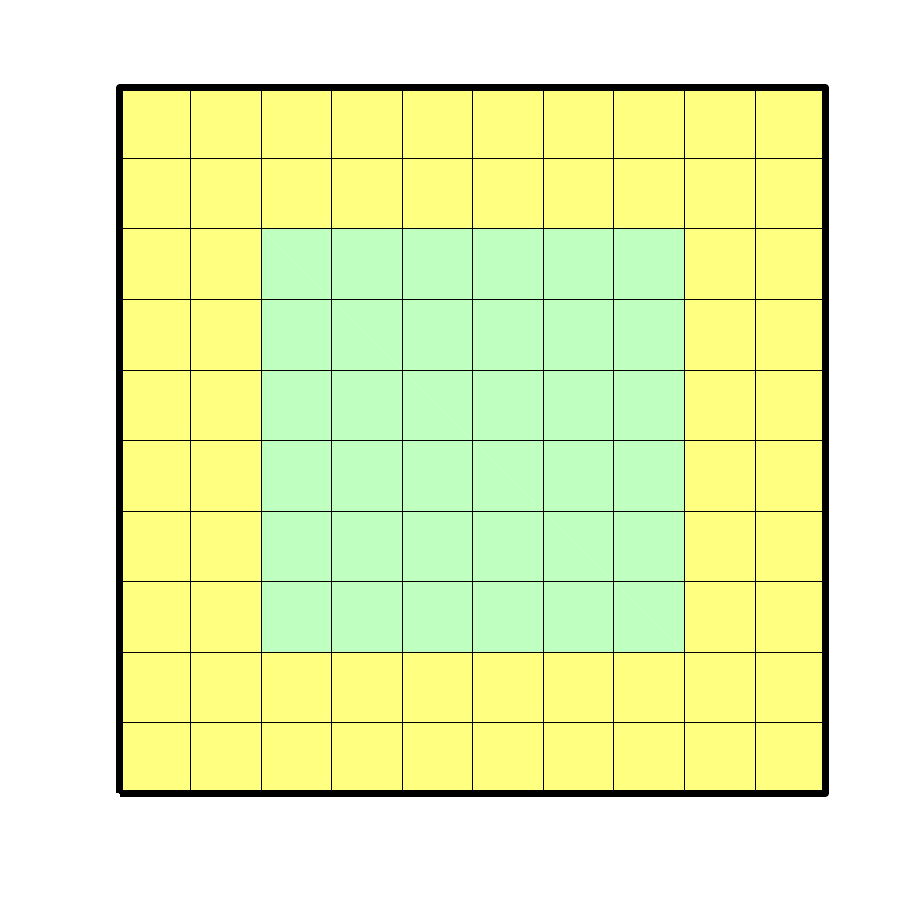}
\caption{$\delta_\mathrm{cut}=0$}
\end{subfigure}
\begin{subfigure}[t]{.23\linewidth}\centering
\includegraphics[trim=15 15 10 10, clip, width=0.91\linewidth]{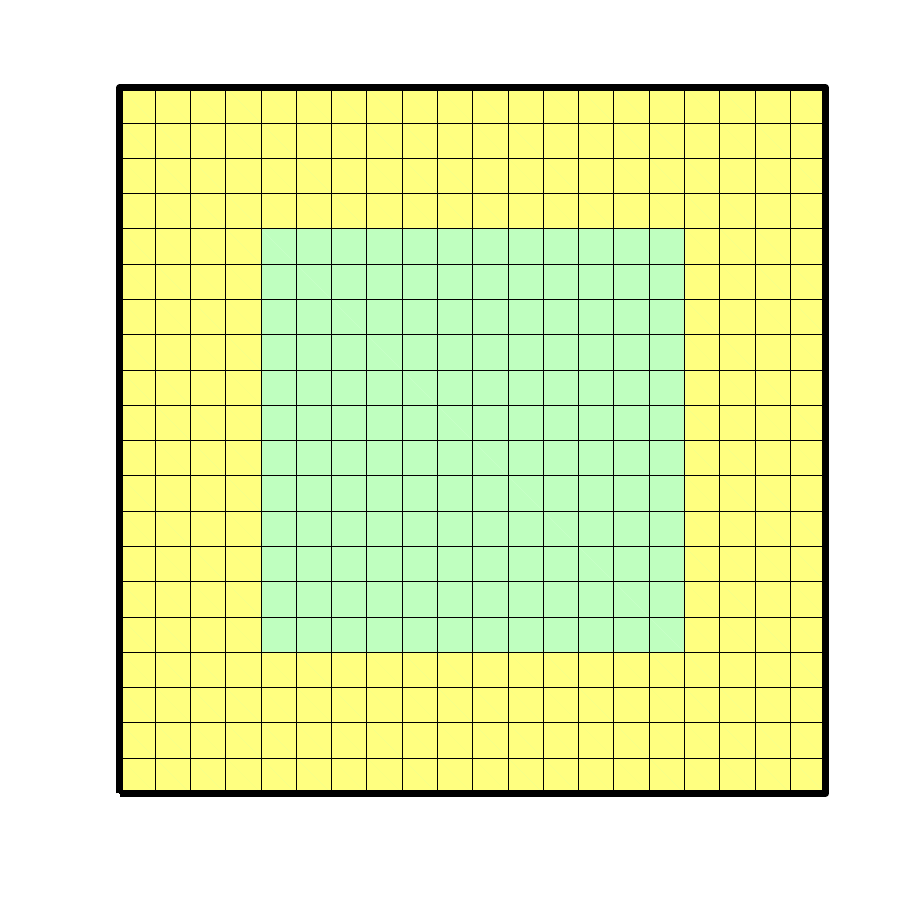}
\caption{$\delta_\mathrm{cut}=0$}
\end{subfigure}\ \
\begin{subfigure}[t]{.23\linewidth}\centering
\includegraphics[trim=15 15 10 10, clip, width=\linewidth]{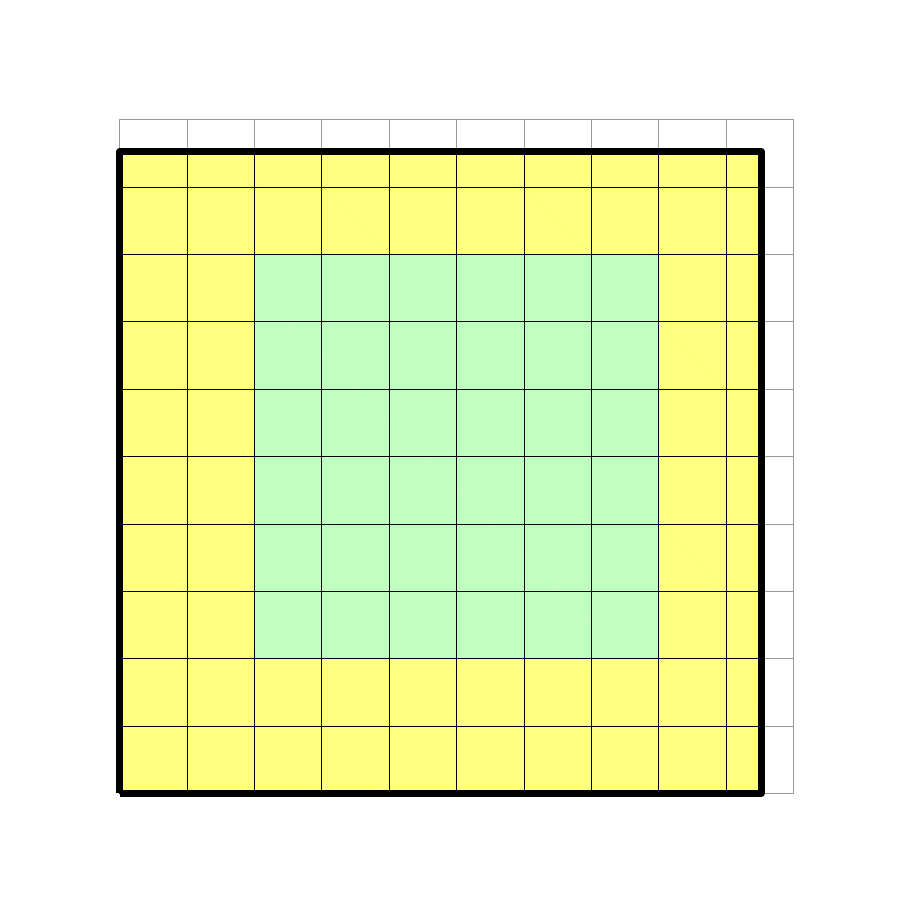}
\caption{$\delta_\mathrm{cut}=0.5$}
\end{subfigure}
\begin{subfigure}[t]{.23\linewidth}\centering
\includegraphics[trim=15 15 10 10, clip, width=\linewidth]{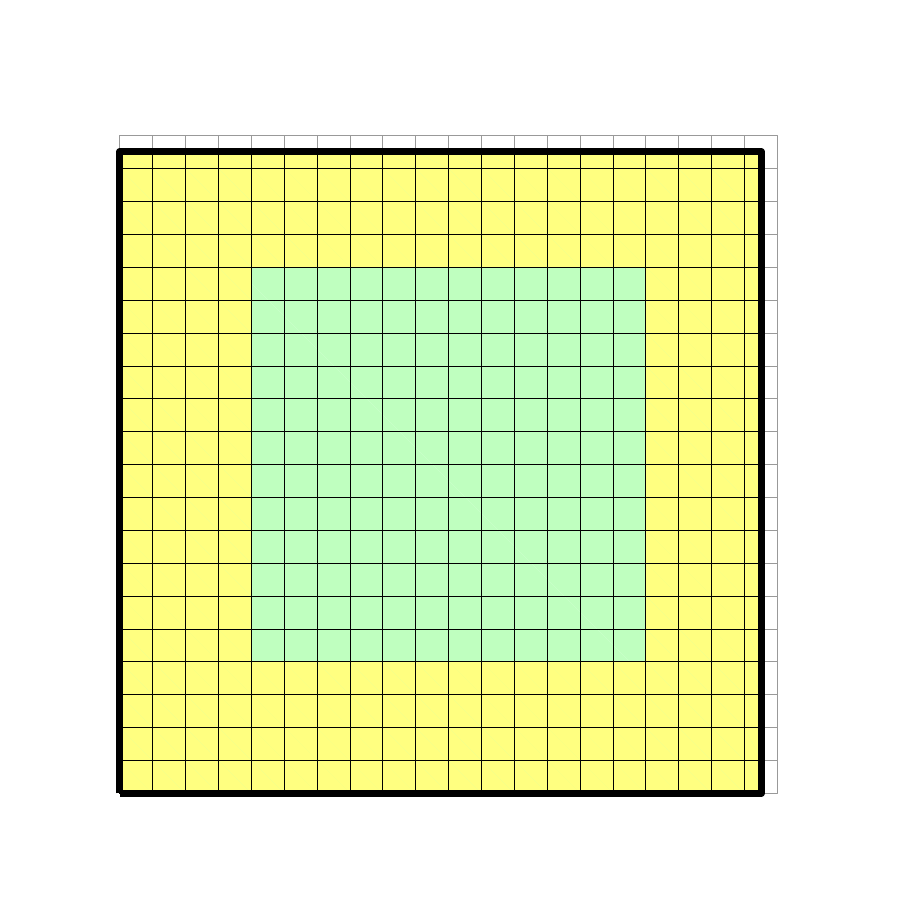}
\caption{$\delta_\mathrm{cut}=0.5$}
\end{subfigure}
\caption{$10\times 10$ and $20\times 20$ meshes with the method fixed at $10\times 10$. Yellow indicates the fixed subdomain $\mcT_{h,\delta} \cap \Omega$ in the method. (a)--(b) One refinement of a fitted mesh. (c)--(d) One refinement of a cut mesh, preserving $\delta_\mathrm{cut}=0.5$. In this construction of cut meshes $\mcT_{h,\delta} \cap \Omega$ is not precisely fixed albeit converges to the corresponding domain in the fitted grid case.
}
\label{fig:meshes-fixed-method}
\end{figure}

\begin{figure}
\centering
\begin{subfigure}[t]{.28\linewidth}
\includegraphics[trim=0 0 5 10, clip, width=\linewidth]{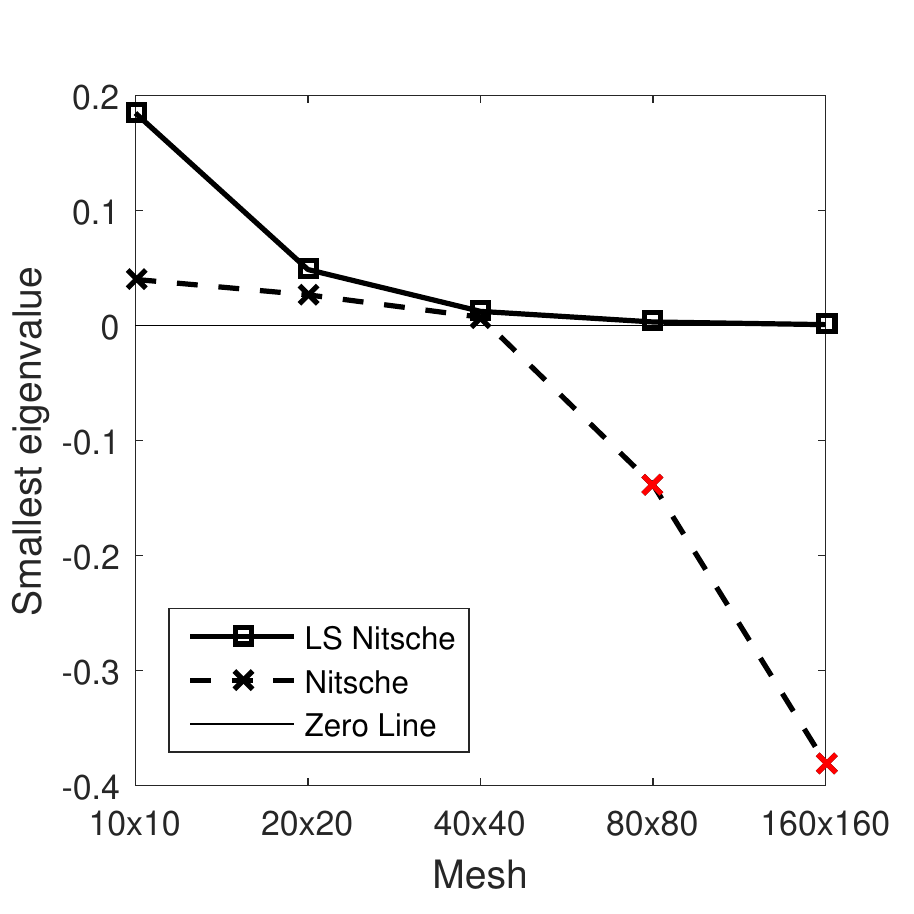}
\caption{$\delta_\mathrm{cut}=0$, $\tau=1$}
\end{subfigure}
\begin{subfigure}[t]{.28\linewidth}
\includegraphics[trim=0 0 5 10, clip, width=\linewidth]{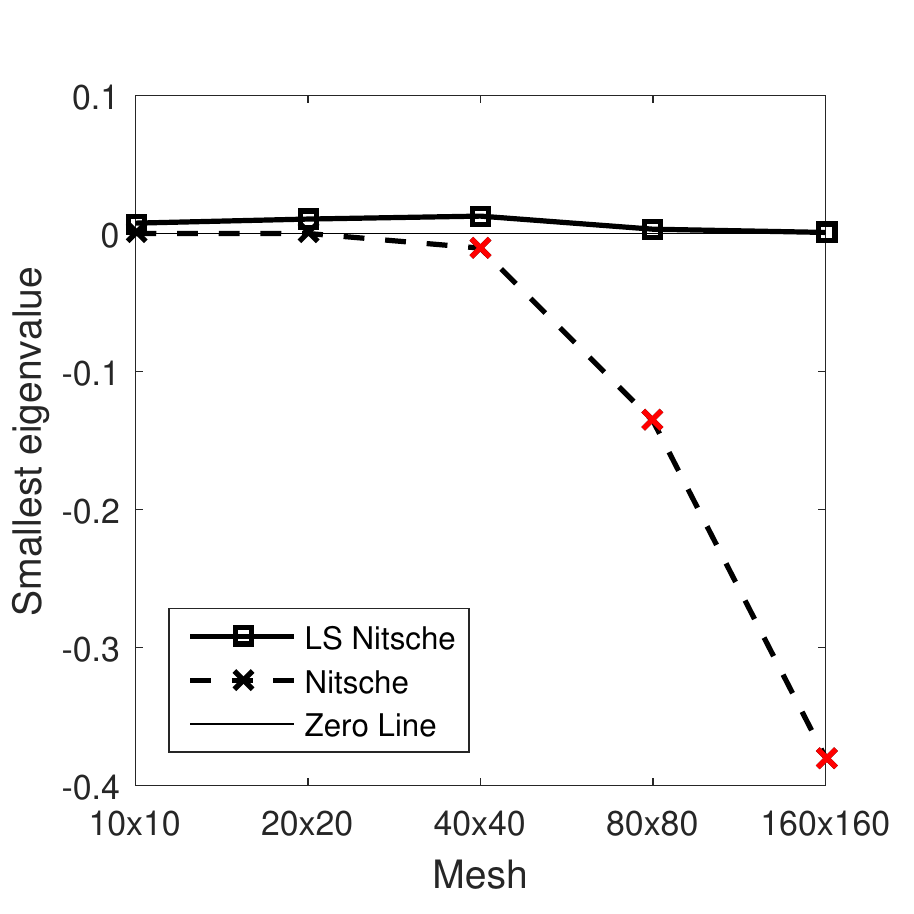}
\caption{$\delta_\mathrm{cut}=0.5$, $\tau=1$}
\end{subfigure}
\begin{subfigure}[t]{.28\linewidth}
\includegraphics[trim=0 0 5 10, clip, width=\linewidth]{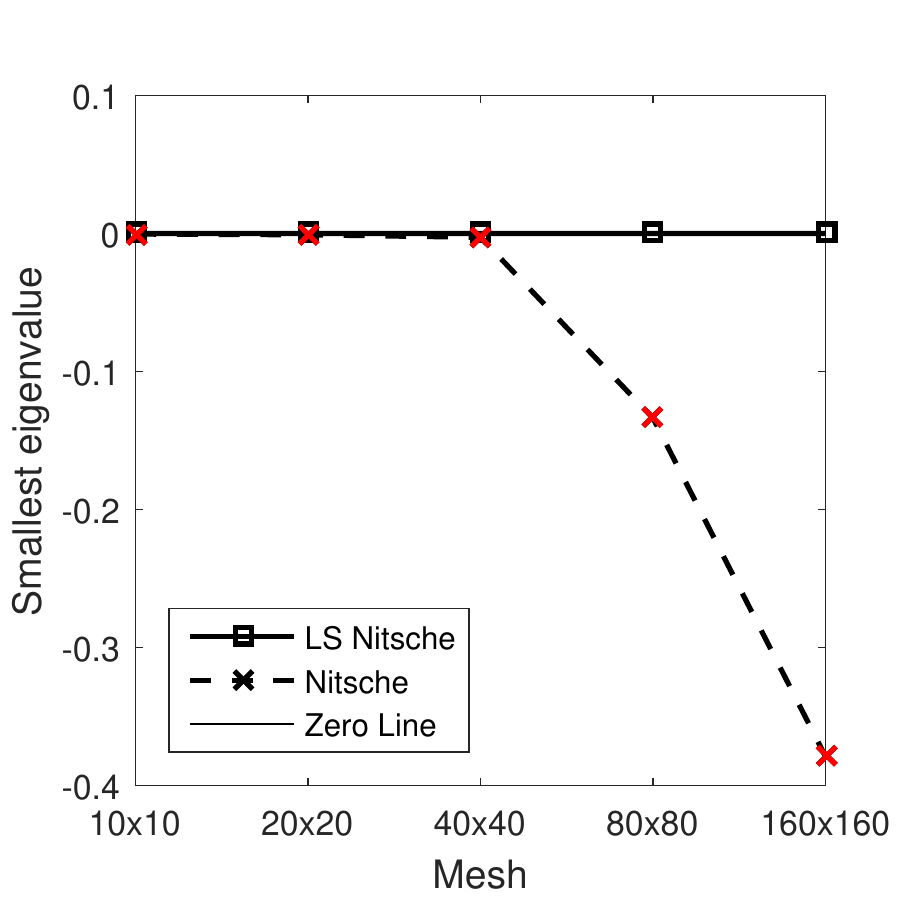}
\caption{$\delta_\mathrm{cut}=0.9$, $\tau=1$}
\end{subfigure}

\begin{subfigure}[t]{.28\linewidth}
\includegraphics[trim=0 0 5 10, clip, width=\linewidth]{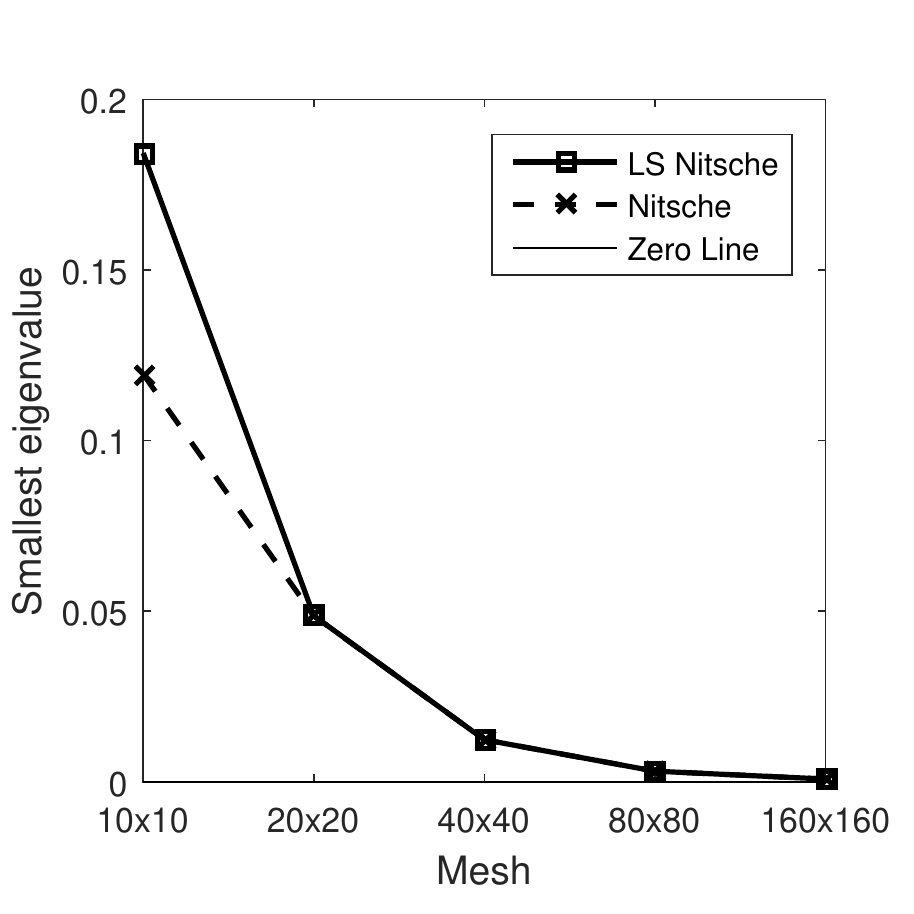}
\caption{$\delta_\mathrm{cut}=0$, $\tau=0.1$}
\end{subfigure}
\begin{subfigure}[t]{.28\linewidth}
\includegraphics[trim=0 0 5 10, clip, width=\linewidth]{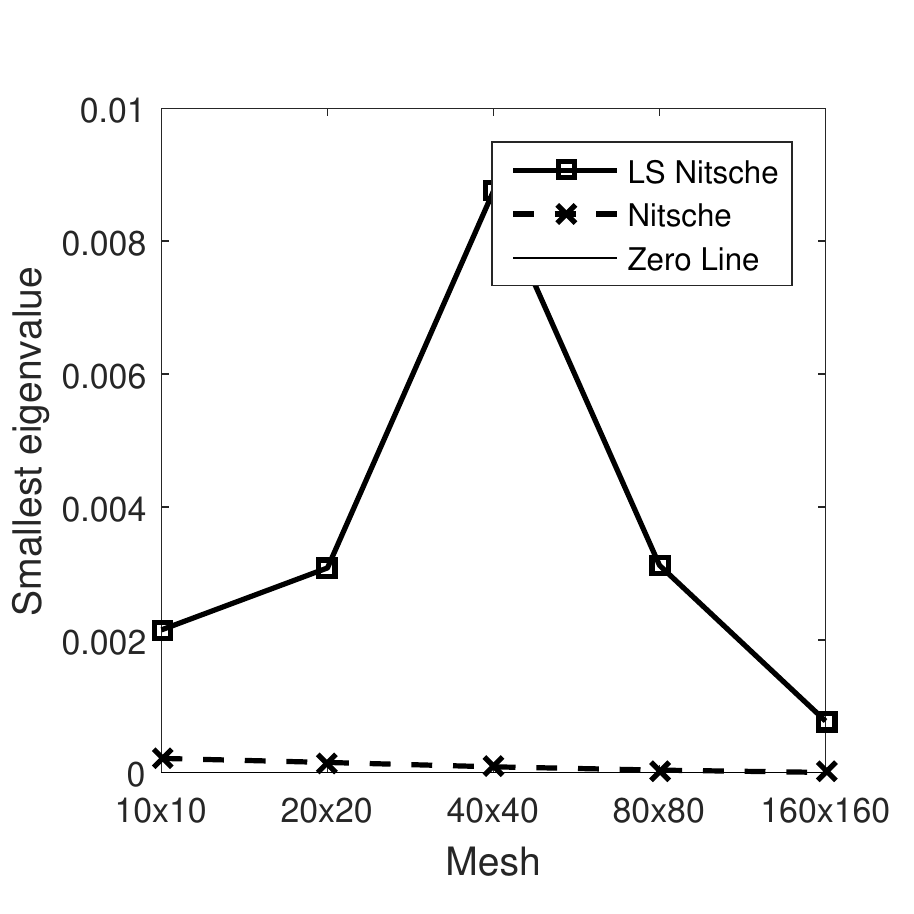}
\caption{$\delta_\mathrm{cut}=0.5$, $\tau=0.1$}
\end{subfigure}
\begin{subfigure}[t]{.28\linewidth}
\includegraphics[trim=0 0 5 10, clip, width=\linewidth]{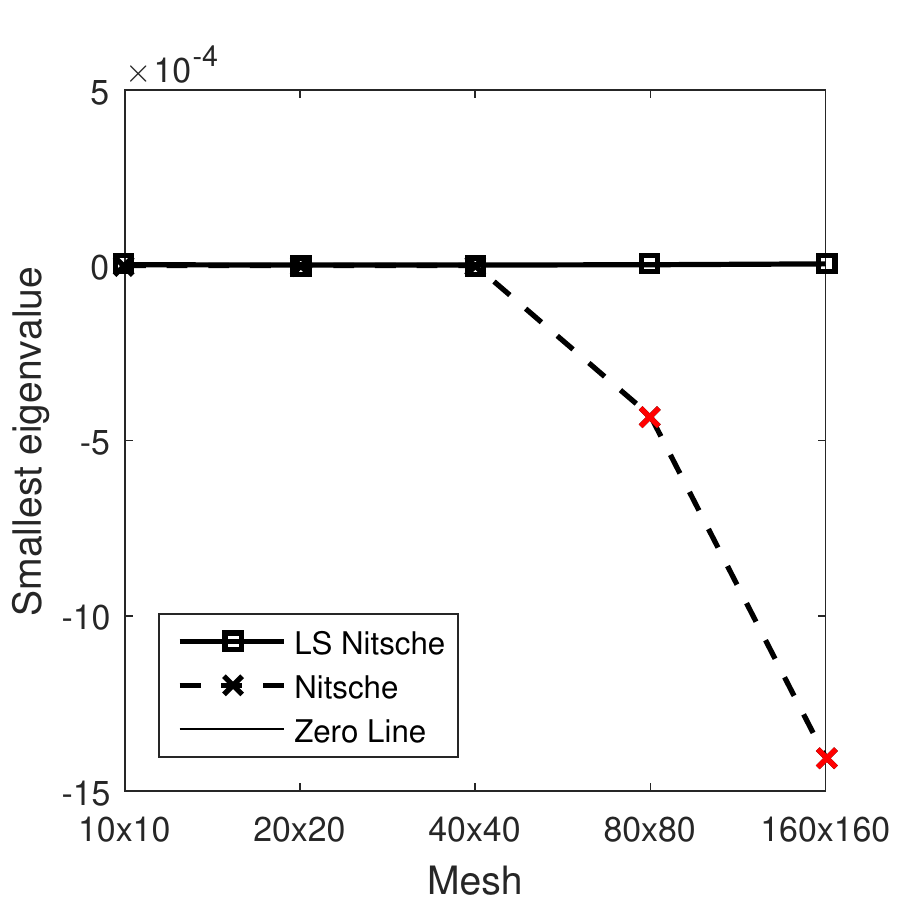}
\caption{$\delta_\mathrm{cut}=0.9$, $\tau=0.1$}
\end{subfigure}

\caption{Smallest stiffness matrix eigenvalues versus mesh refinements when the method is fixed at $10\times 10$. That means that $\mcT_{h,\delta}\cap\Omega$ and $h$ are fixed in the method and thus independent of the choice of mesh. Negative eigenvalues are displayed in red. The least squares stabilized Nitsche retains a positive smallest eigenvalue in all experiments.}
\label{fig:smev-fixed-method}
\end{figure}

\section{Conclusions}

We have developed a new symmetric Nitsche formulation for cut $C^1$ elements that is coercive on $H^2(\Omega)$ and that does not rely on ghost penalties or choosing a very large penalty parameter in the Nitsche penalty term. Instead, the method is based on adding certain consistent least squares terms on, and in the vicinity of, the boundary. This new least square stabilized symmetric Nitsche method has the following notable features:
\begin{itemize}
\item The least squares stabilization terms are consistent and are only added elementwise.

\item The method is coercive with respect to the energy norm on the full space $V=H^2(\Omega)$ rather than on the discrete space $V_h$ as is the case in the analysis of the standard symmetric Nitsche method which rely on inverse inequalities on $V_h$.

\item Since the intersection between elements and the domain $\Omega$ may be arbitrary small the $H^2(\Omega)$ coercivity only guarantee that the stiffness matrix is positive semidefinite. To ensure a positive definite stiffness matrix we employ the Basis Function Removal technique recently introduced in \cite{ElfLarLar18}.

\item In the numerical results we achieve very stable convergence results with respect to the cut situation even when using a penalty parameter which is only of moderate size. This is desirable in many cases where choosing a too large penalty parameter may introduce locking, for example when the boundary is non-trivial within cut elements, when using inhomogeneous boundary data, or in interface problems on non-matching grids.
The $\tau$ parameter in the formulation allows for convenient adjustment of the penalty parameter size in the Nitsche penalty term while maintaining coercivity.

\end{itemize}

\bigskip
\paragraph{Acknowledgements.}
This research was supported in part by the Swedish Foundation for Strategic Research Grant No.\ AM13-0029, the Swedish Research Council Grants Nos.\ 2013-4708, 2017-03911 and the Swedish Research Programme Essence.


\bibliographystyle{abbrv}
\footnotesize{
\bibliography{refs}
}

\bigskip
\bigskip
\noindent
\footnotesize {\bf Authors' addresses:}

\smallskip

\noindent
Daniel Elfverson,  \quad \hfill \addressumushort\\
{\tt daniel.elfverson@umu.se}

\smallskip
\noindent
Mats G. Larson,  \quad \hfill \addressumushort\\
{\tt mats.larson@umu.se}

\smallskip
\noindent
Karl Larsson, \quad \hfill \addressumushort\\
{\tt karl.larsson@umu.se}

\end{document}